\documentclass[lettersize,onecolumn]{IEEEtran}
\usepackage{amsmath,amsfonts}
\usepackage{algorithm}
\usepackage{algpseudocode}
\usepackage{array}
\usepackage[caption=false,font=normalsize,labelfont=sf,textfont=sf]{subfig}
\usepackage{textcomp}
\usepackage{stfloats}
\usepackage{url}
\usepackage{verbatim}
\usepackage{graphicx}
\usepackage{cite}
\usepackage{amssymb}
\usepackage{amsthm}
\usepackage{hyperref}
\usepackage{todonotes}
\usepackage{bm}
\usepackage{cuted}

\usepackage{amsthm}

\newtheoremstyle{ieeenormal}
{0.5em}              % space above
{0.5em}              % space below
{\normalfont}        % body font: upright, not italic
{\parindent}         % indent amount
{\normalfont}        % theorem head font controlled below
{}                   % punctuation controlled below
{0.5em}              % space after theorem head
{\thmname{\textit{#1}}\thmnumber{\textit{ #2}}\thmnote{\textit{ (#3)}}:}

\theoremstyle{ieeenormal}
\newtheorem{thm}{Theorem}
\newtheorem{defn}{Definition}
\newtheorem{remark}{Remark}
\newtheorem{lem}{Lemma}

\newtheorem{example}{Example}

\makeatletter
\renewenvironment{proof}[1][Proof]{%
	\par
	\pushQED{\qed}%
	\normalfont
	\topsep6\p@\@plus6\p@\relax
	\trivlist
	\item[\hskip 1.5em\itshape #1\@addpunct{:}]%
	\ignorespaces
}{%
	\popQED
	\endtrivlist
	\@endpefalse
}
\makeatother

\begin{document}

\title{Spectral Graph Uncertainty Principles via the Graph Fractional Fourier Transform}

\author{Yu Zhang and Bing-Zhao Li$^{\ast}$,~\IEEEmembership{Member,~IEEE}
	\thanks{The current study was partially supported by the Natural Science Foundation of Beijing Municipality (No. 4242011), and National Natural Science Foundation of China (No. 62571042), \textit{(Corresponding author: Bing-Zhao Li.)}}% <-this % stops a space
	\thanks{Yu Zhang is with the School of Mathematics and Statistics, Beijing Institute of Technology, Beijing 102488, China (e-mail: zhangyu$\_$bit@outlook.com).}
	\thanks{Bing-Zhao Li is with the School of Mathematics and Statistics, Beijing Institute of Technology, Beijing 102488, China (e-mail: li$\_$bingzhao@bit.edu.cn).}}
%\author{IEEE Publication Technology,~\IEEEmembership{Staff,~IEEE,}
%        % <-this % stops a space
%\thanks{This paper was produced by the IEEE Publication Technology Group. They are in Piscataway, NJ.}% <-this % stops a space
%\thanks{Manuscript received October 26, 2023; revised December 8, 2023.}}

% The paper headers
\markboth{Journal of \LaTeX\ Class Files,~Vol.~1, No.~2, December~2023}%
{Shell \MakeLowercase{\textit{et al.}}: A Sample Article Using IEEEtran.cls for IEEE Journals}

\IEEEpubid{0000--0000~\copyright~2023 IEEE}
% Remember, if you use this you must call \IEEEpubidadjcol in the second
% column for its text to clear the IEEEpubid mark.

\maketitle

\begin{abstract}
This paper develops a graph fractional uncertainty principle in the graph fractional Fourier transform (GFRFT) domain. We introduce localization operators in the vertex domain and the graph fractional spectral domain, and build an operator framework to characterize the joint localization of graph signals. A sandwiched joint localization operator is first constructed, whose largest eigenvalue quantifies the attainable simultaneous concentration in the two domains. Then, rotated localization operators and the numerical range are used to provide a geometric description of the admissible uncertainty region, together with a polygonal approximation method for its computation. Numerical examples show that the fractional order reshapes the vertex-graph fractional spectral localization trade-off, and enlarge or shrink the uncertainty region relative to the graph Fourier transform based case. These results generalize classical graph uncertainty principles to the GFRFT domain and provide a flexible tool for graph fractional signal analysis and graph-adapted localized representations.
\end{abstract}

\begin{IEEEkeywords}
Uncertainty principle, graph signal processing, localized representation, graph fractional Fourier transform.
\end{IEEEkeywords}

\section{Introduction}
\IEEEPARstart{G}{raph} signal processing (GSP) has emerged as an effective framework for analyzing data defined on irregular and network-structured domains, such as sensor networks, social networks, transportation systems, biological networks, and point clouds \cite{GFTlaplace,GFTadjacency,Gintroduction,Goverview,Ghistory}. In GSP, the graph topology provides a discrete geometric structure, while graph signals assign data values to the vertices. By using graph shift operators, such as the adjacency matrix or the graph Laplacian, classical signal processing concepts, including Fourier analysis, filtering, sampling, and localized representations, can be extended to irregular domains \cite{GFTlaplace,GFTadjacency,Gintroduction}. Among these tools, the graph Fourier transform (GFT), defined through the eigendecomposition of a graph shift operator, plays a central role in representing graph signals in the spectral domain.

A fundamental topic in signal analysis is the Heisenberg uncertainty principle, which describes the intrinsic limitation that a signal cannot be arbitrarily localized in two coupled domains. In classical time-domain signal processing, this principle characterizes the trade-off between temporal and frequency localization \cite{UncertaintyC,UncertaintyM}. Similarly, uncertainty principles in GSP have been extensively studied to capture the trade-off between localization in the vertex and graph spectral domains. An early spectral graph uncertainty principle was established by defining vertex and spectral spreads and characterizing the corresponding uncertainty curve \cite{GspectralUC}. Since distance-based spreads may depend strongly on the choice of graph distance \cite{Resistance,Geodesic,Diffusion}, alternative formulations have also been developed. In particular, \cite{GUncertainty} proposed a uncertainty principle based on energy concentration, where localization is quantified by the proportion of signal energy contained in prescribed vertex and spectral subsets, inspired by the works on prolate spheroidal wave functions \cite{PSWfunctions1,PSWfunctions2}. Later, \cite{GshapesUC} introduced a flexible operator-theoretic and numerical-range framework for describing uncertainty regions associated with general vertex and spectral filters. More recently, graph uncertainty principles have been extended from the GFT to the graph linear canonical transform (GLCT) domain \cite{GLCTsampling}, leading to GLCT-based sampling theory. Extensions have also been developed in the joint time-vertex signal processing and the generalized graph signal processing frameworks \cite{JFTUC,GGSPUC}. These studies provide important theoretical foundations for localization, sampling, and representation of graph signals.

However, most existing graph uncertainty principles are formulated in the conventional GFT domain. Although the GFT is a natural extension of the classical Fourier transform to graphs, it provides only a fixed spectral representation determined by the chosen graph shift operator. In many applications, additional degrees of freedom in the spectral representation are needed to characterize the transition from the vertex domain to the spectral domain and to process graph signals with chirp-like characteristics \cite{GFRFT,GFRFTconvolution,GFRFTspectral}. Inspired by the classical fractional Fourier transform \cite{FRFT1,FRFT2,DFRFT}, the graph fractional Fourier transform (GFRFT) has been proposed to provide fractional spectral representations of graph signals \cite{GFRFT,GFRFTconvolution,GFRFTspectral}. The GFRFT introduces a transform order that continuously controls the spectral representation, with the GFT as a special case. Recent studies on the GFRFT have established rigorous definitions \cite{GFRFT_unified,GFRFTdirected}, including power-function-based formulations for arbitrary graphs and operator constructions based on hyper-differential equations. The GFRFT has been successfully applied to vertex-frequency analysis \cite{GFRFT_unified,GFRFTdirected,WGFRFT,JFRFT}, filtering \cite{GFRFTfiltering,JFRFTfilter,JFRFTwiener,HGFRFT}, sampling \cite{HGFRFT,GFRFTsampling,JFRFTsampling}, and reconstruction \cite{LSBreconstruction,GGSPreconstruction,ComplexKernel}, thereby further enriching the theoretical and practical scope of GSP.

\IEEEpubidadjcol

Despite these advances, uncertainty principles in the GFRFT domain remain insufficiently understood. Existing graph uncertainty principles mainly characterize the trade-off between vertex-domain localization and standard graph spectral localization \cite{GUncertainty,GshapesUC}. When the spectral representation is replaced by the graph fractional spectral domain, it becomes necessary to establish a unified formulation of localization in the vertex and graph fractional spectral domains, to characterize how the fractional order changes the geometry of admissible localization regions, and to determine whether classical uncertainty principles based on energy concentration and numerical range can be extended to the GFRFT framework. Addressing these issues is essential for understanding the theoretical structure of the GFRFT and for designing graph-adapted localized representations with tunable spectral characteristics.

In this paper, we develop a graph spectral uncertainty principle via the GFRFT. Inspired by \cite{GshapesUC}, we introduce localization operators in the vertex domain and the graph fractional spectral domain, and use them to quantify the joint localization of graph signals. The proposed framework is operator-based and therefore applies to both soft localization filters \cite{Optlocalized,Slepian,Erb,Fiedler} and energy-concentration operators induced by prescribed subsets \cite{GUncertainty,Spatio1,Spatio2}. Based on these localization operators, we derive two complementary characterizations of uncertainty in the GFRFT domain. The first is based on a sandwiched joint localization operator, whose largest eigenvalue determines the attainable joint concentration. The second is based on a rotated localization operator and the numerical range, which provides a geometric description of the admissible uncertainty region and naturally leads to a polygonal numerical approximation method.

The main contributions of this paper are summarized as follows:
\begin{itemize}
	\item We construct vertex and fractional spectral localization operators associated with the GFRFT, providing a unified framework for measuring graph signal localization in the vertex domain and the graph fractional spectral domain.
	\item We establish a GFRFT-based uncertainty principle using a sandwiched joint operator. This result develops graph fractional uncertainty principles based on energy concentration and characterizes the limitation of simultaneous vertex and fractional spectral localization.
	\item We provide a geometric uncertainty characterization for the GFRFT based on rotated operators and the numerical range. This approach describes the admissible localization region through supporting lines and connects its boundary points with eigenvectors of a family of Hermitian operators.
	\item We develop a numerical approximation scheme for computing admissible uncertainty regions and present examples showing how the fractional order and the choice of localization filters affect the geometry of the vertex-fractional spectral trade-off.
\end{itemize}

The remainder of this paper is organized as follows. Section \ref{sec2} reviews basic concepts of GSP, the GFRFT, and existing graph uncertainty principles. Section \ref{sec3} introduces the vertex and graph fractional spectral localization operators and derives the corresponding joint localization measures. Section \ref{sec4} establishes the proposed GFRFT-based uncertainty principles through sandwiched and rotated operators. Section \ref{sec5} presents numerical examples and visualizations of the corresponding uncertainty regions. Finally, Section \ref{sec6} concludes the paper. Fig. \ref{fig:overview} presents an overview of the proposed graph fractional uncertainty principle framework and its applications.

\begin{figure}[!t]
	\centering
	\includegraphics[width=0.9\linewidth]{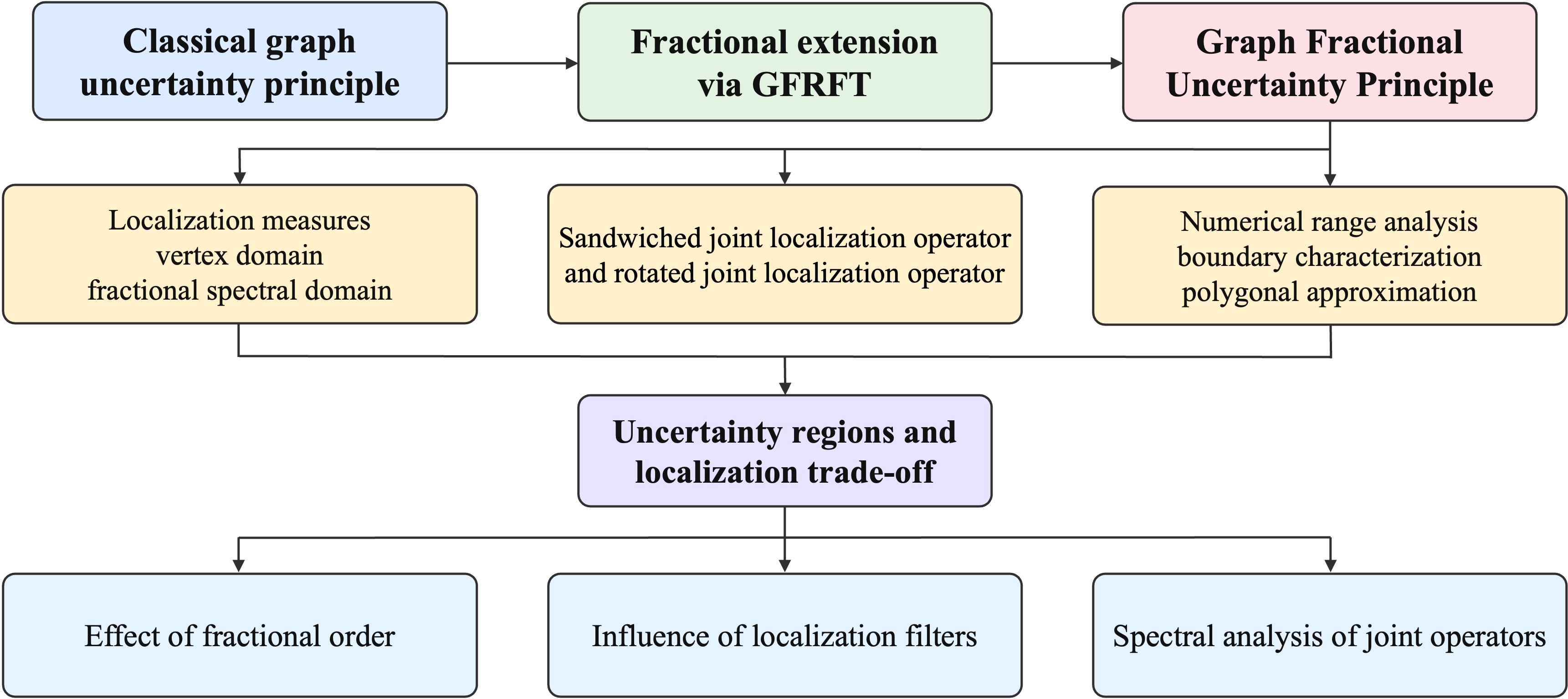}
	\vspace*{-8pt}
	\caption{Overview of the proposed graph fractional uncertainty principle framework and its applications.}
	\label{fig:overview}
\end{figure}

\section{Background}
\label{sec2}
This section aims to provide an overview of GSP and the GFRFT, as well as graph uncertainty principles, which are essential for constructing the graph fractional uncertainty principles.

\subsection{Graph and Graph Signals}
GSP extends classical signal processing tools to signals defined on irregular domains represented by graphs \cite{GraphTheory}. A graph is denoted by the triplet $\mathcal{G}=(\mathcal{V},\mathcal{E},\mathbf{W})$, where 
$\mathcal{V}=\{v_1,\ldots,v_N\}$ represents the set of vertices (or nodes), 
$\mathcal{E}\subseteq \mathcal{V}\times\mathcal{V}$ denotes the set of edges connecting pairs of vertices, 
and $\mathbf{W}\in\mathbb{R}^{N\times N}$ is the weighted adjacency matrix whose entries $w_{ij}$ represent the connection weights between vertices $v_i$ and $v_j$. 
In this work we focus on undirected graphs, for which $\mathbf{W}$ is symmetric and nonnegative.

A graph signal on $\mathcal{G}$ is defined as a function $x:\mathcal{V}\rightarrow\mathbb{R}$ (or $\mathbb{C}$) that assigns a scalar value to each vertex of the graph \cite{ComplexKernel}. 
Since the number of vertices is finite and the set $\mathcal{V}$ is ordered, the graph signal can be naturally represented as a vector 
\[
\bm{x}=[x_1,\ldots,x_N]^{\top} \in \mathbb{R}^N \; (\text{or } \mathbb{C}^N),
\]
where $x_i$ denotes the signal value associated with vertex $v_i$.

A fundamental operator in GSP is the graph Laplacian matrix defined as $\mathbf{L}=\mathbf{K}-\mathbf{W}$, where $\mathbf{K}=\mathrm{diag}(k_1, \ldots, k_N)$ is the diagonal degree matrix with entries $k_i = \sum^N_{j=1} w_{ij}$. The normalized graph Laplacian matrix is given by
\[
\bm{\mathcal{L}}=\mathbf{K}^{-1/2}\mathbf{L}\mathbf{K}^{-1/2}.
\]

Since the adjacency matrix $\mathbf{W}$ is symmetric for undirected graphs, the Laplacian operator $\boldsymbol{\mathcal{L}}$ is also symmetric and positive semidefinite. Therefore, it admits the eigenvalue decomposition
\begin{equation}
	\bm{\mathcal{L}}=\mathbf{U}\mathbf{\Xi }\mathbf{U}^{\mathrm{H}}, \label{L}
\end{equation}
where $\bm{\Xi}=\mathrm{diag}(\xi_1,\ldots,\xi_N)$ is the diagonal matrix of eigenvalues ordered as 
\[
0=\xi_1 \le \xi_2 \le \cdots \le \xi_N,
\]
and $\mathbf{U}=[\bm{u}_1,\ldots,\bm{u}_N]$ is the orthogonal matrix whose columns are the corresponding normalized eigenvectors.

\subsection{Graph Fractional Fourier Transform}
We briefly recall the GFT before introducing the GFRFT. Based on the eigendecomposition in Eq.~\eqref{L}, the GFT of a graph signal $\bm{x}$ is defined as \cite{GFTlaplace,GFTadjacency}
\begin{equation}
	\hat{\bm{x}}=\mathbf{U}^{\mathrm{H}}\bm{x},
\end{equation}
and the inverse GFT is defined as
\begin{equation}
	\bm{x}=\mathbf{U}\hat{\bm{x}}.
\end{equation}

The Laplacian eigenvalues $\xi_i$ represent graph frequencies, with smaller values corresponding to smoother signal variations. The Laplacian quadratic form can thus be written as
\begin{equation}
	\bm{x}^{\mathrm{H}}\bm{\mathcal{L}}\bm{x}
	=
	\hat{\bm{x}}^{\mathrm{H}}\bm{\Xi}\hat{\bm{x}}
	=
	\sum_{i=1}^{N}\xi_i|\hat{x}_i|^2, \label{xHLx}
\end{equation}
which characterizes the weighted spectral energy.

Extending the GFT, the GFRFT provides a fractional spectral representation analogous to the discrete fractional Fourier transform \cite{GFRFT}. For $\bm{x}\in\mathbb{C}^N$, the $\alpha$-th GFRFT is defined as
\begin{equation}
	\hat{\bm{x}}
	=
	\mathbf{F}^{\alpha}\bm{x}
	=
	\mathbf{Q}\mathbf{\Lambda}^{\alpha}\mathbf{Q}^{\mathrm{H}}\bm{x},
	\quad \alpha \in \mathbb{R},
	\label{GFRFT}
\end{equation}
with inverse
\begin{equation}
	\bm{x}
	=
	\mathbf{F}^{-\alpha}\hat{\bm{x}}
	=
	\mathbf{Q}\mathbf{\Lambda}^{-\alpha}\mathbf{Q}^{\mathrm{H}}\hat{\bm{x}}.
\end{equation}
Here, $\mathbf{Q}$ and $\mathbf{\Lambda}$ are obtained from
$\mathbf{U}^{\mathrm{H}}=\mathbf{Q}\mathbf{\Lambda}\mathbf{Q}^{\mathrm{H}}$, and $\mathbf{\Lambda}^{\alpha}$ is formed by raising each diagonal entry of $\mathbf{\Lambda}$ to the power $\alpha$. Eq.~\eqref{GFRFT} represents signals in the \emph{graph fractional spectral domain}. Let $\hat{\mathcal{G}}:=\{1,2,\ldots,N\}$ denote the index set associated with this domain. Based on the GFRFT, the graph fractional Laplacian matrix can be defined as \cite{GFRFT_unified}
\begin{equation}
	\bm{\mathcal{L}}^{\alpha}
	=
	\mathbf{F}^{-\alpha}\bm{\Xi}^{\alpha}\mathbf{F}^{\alpha}, \label{Lalpha}
\end{equation}
where $\bm{\Xi}^{\alpha} = \mathrm{diag}(\xi^{\alpha}_1,\ldots,\xi^{\alpha}_N)$, and this definition differs from traditional fractional Laplacians, as it is based on the GFRFT basis $\mathbf{F}^{\alpha}$ instead of the GFT basis $\mathbf{U}$, resulting in an $\alpha$-dependent spectral representation. Notably, $\alpha=0$ yields the identity transform, while $\alpha=1$ recovers the GFT. 

Moreover, the GFRFT-based convolution is defined as \cite{GFRFT,GFRFTconvolution}
\begin{equation}
	\bm{x} \ast \bm{y} := \mathbf{F}^{-\alpha}(\hat{\bm{x}} \circ \hat{\bm{y}}), \label{convolution}
\end{equation}
where $\circ$ denotes the Hadamard product.

\subsection{Graph Uncertainty Principles}
The classical Heisenberg uncertainty principle states that a signal cannot be simultaneously well localized in time and frequency \cite{UncertaintyC,UncertaintyM}, i.e.,
\begin{equation}
	\Delta_t^{2}\Delta_\omega^{2} \geq \frac{1}{4},
\end{equation}
where $\Delta_t^{2}$ and $\Delta_\omega^{2}$ denote temporal and spectral spreads.

For graph signals, an analogous uncertainty principle was established in \cite{GspectralUC}. Given $\bm{x} \in \mathbb{R}^N$, the vertex and spectral spreads are defined as
\begin{equation}
	\begin{cases}
		\Delta_{v}^{2}:=\min_{u_{0}\in \mathcal{V}} 
		\frac{1}{\|\bm{x}\|^{2}_2}
		\bm{x}^{\top}\mathbf{P}_{u_{0}}^{2}\bm{x},\\
		\Delta_s^{2}:=
		\frac{1}{\|\bm{x}\|^{2}_2}
		\sum_{i=1}^{N}\xi_{i}|\hat{x}_{i}|^{2},
	\end{cases}  \label{GFTspread}
\end{equation}
where $\mathbf{P}_{u_{0}}=\mathrm{diag}(d(u_{0},v_{1}),\ldots,d(u_{0},v_{N}))$. The achievable pairs form an uncertainty region whose lower boundary is given by
\[
\gamma(s)=\min_{\bm{x}} \ \Delta_v^2(\bm{x})
\quad \text{s.t. } \Delta_s^2(\bm{x})=s.
\]

This formulation relies on graph distances, which may be ill-defined for abstract graphs. To address this, an alternative energy concentration framework was proposed in \cite{GUncertainty}. Instead of spreads, localization is quantified by the energy ratios within prescribed subsets. For graph signals,
\begin{equation}
	\zeta
	=
	\frac{\sum_{i\in\mathcal{S}}|x_i|^2}{\|\bm{x}\|^2_2},
	\qquad
	\eta
	=
	\frac{\sum_{k\in\mathcal{F}}|\hat{x}_k|^2}{\|\bm{x}\|^2_2}, \label{GFTenergy}
\end{equation}
where $\mathcal{S}\subseteq\mathcal{V}$ and $\mathcal{F}\subseteq\hat{\mathcal{G}}$. The pair $(\zeta,\eta)$ characterizes joint vertex-spectral concentration and defines an alternative uncertainty principle \cite{GUncertainty}.

%\newpage
\section{Vertex and Spectral Localization in the GFRFT Domain}
\label{sec3}

To establish an uncertainty principle in the GFRFT domain, we first introduce a unified operator-based framework to quantify the localization of graph signals in both the vertex and graph fractional spectral domains. This framework is built upon two normalized filtering functions $\bm{f}, \bm{g} \in \mathbb{R}^N$ satisfying \cite{GshapesUC}
\[
0 \le f_i \le 1, \quad 0 \le \hat{g}_i \le 1, \quad 
\|\bm{f}\|_{\infty} = \|\hat{\bm{g}}\|_{\infty} = 1.
\]

\subsection{The GFRFT Problem Formulation}

Let $\bm{x}\in\mathbb{C}^{N}$ be a graph signal defined on $\mathcal{G}$. Without loss of generality, we assume $\|\bm{x}\|_2=1$, since the localization measures introduced below are scale-invariant.

Given the filters $\bm{f}$ and $\bm{g}$, the vertex domain and fractional spectral domain localization operators are defined as
\[
\begin{aligned}\mathbf{D}_{f} \bm{x} &:=\bm{f} \circ \bm{x} ,\\ \mathbf{B}^{\alpha }_{g} \bm{x} &:=\bm{g} \ast \bm{x} =\mathbf{F}^{-\alpha } (\hat{\bm{g} } \circ \hat{\bm{x} } )=\mathbf{F}^{-\alpha } \mathbf{D}_{\hat{g} } \mathbf{F}^{\alpha } \bm{x} ,\end{aligned} 
\]
where $\mathbf{D}_{f}=\mathrm{diag}(\bm{f})$, $\mathbf{D}_{\hat{g}}=\mathrm{diag}(\hat{\bm{g}})$, and the convolution is defined as in \eqref{convolution}. The following result characterizes the fundamental properties of these operators.

\begin{thm}
	The operators $\mathbf{D}_{f}$ and $\mathbf{B}^{\alpha}_{g}$ are Hermitian positive semidefinite, and their spectral norms are equal to $1$.
\end{thm}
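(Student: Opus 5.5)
For $\mathbf{D}_f$ the argument is direct. Since $\bm{f}\in\mathbb{R}^N$, the matrix $\mathbf{D}_f=\mathrm{diag}(\bm{f})$ is real diagonal and hence Hermitian. For any $\bm{x}$ we have $\bm{x}^{\mathrm{H}}\mathbf{D}_f\bm{x}=\sum_i f_i|x_i|^2\ge 0$ because $f_i\ge 0$, so $\mathbf{D}_f$ is positive semidefinite. The spectral norm of a Hermitian positive semidefinite matrix equals its largest eigenvalue. For a diagonal matrix this is $\max_i f_i=\|\bm{f}\|_\infty=1$.

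For $\mathbf{B}^{\alpha}_g=\mathbf{F}^{-\alpha}\mathbf{D}_{\hat g}\mathbf{F}^{\alpha}$, the plan is to show that $\mathbf{F}^{\alpha}$ is unitary with $\mathbf{F}^{-\alpha}=(\mathbf{F}^{\alpha})^{\mathrm{H}}$. Then $\mathbf{B}^{\alpha}_g$ is a unitary similarity of the real, nonnegative diagonal matrix $\mathbf{D}_{\hat g}$, and the same reasoning applies.

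\begin{itemize}
\item \emph{Hermitian:} $(\mathbf{B}^\alpha_g)^{\mathrm{H}}=(\mathbf{F}^{\alpha})^{\mathrm{H}}\mathbf{D}_{\hat g}(\mathbf{F}^{-\alpha})^{\mathrm{H}}=\mathbf{F}^{-\alpha}\mathbf{D}_{\hat g}\mathbf{F}^{\alpha}=\mathbf{B}^\alpha_g$.
\item \emph{Positive semidefinite:} $\bm{x}^{\mathrm{H}}\mathbf{B}^\alpha_g\bm{x}=\sum_k \hat g_k|\hat x_k|^2\ge0$ with $\hat{\bm x}=\mathbf{F}^\alpha\bm x$.
\item \emph{Norm:} the eigenvalues of $\mathbf{B}^\alpha_g$ are the $\hat g_k$, so $\|\mathbf{B}^\alpha_g\|_2=\|\hat{\bm g}\|_\infty=1$.
\end{itemize}

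The unitarity of $\mathbf{F}^{\alpha}$ is the only real step. Since $\mathbf{U}$ is orthogonal, $\mathbf{U}^{\mathrm{H}}$ is unitary and therefore normal. It thus admits a spectral decomposition $\mathbf{U}^{\mathrm{H}}=\mathbf{Q}\mathbf{\Lambda}\mathbf{Q}^{\mathrm{H}}$ with $\mathbf{Q}$ unitary and $\mathbf{\Lambda}$ diagonal, and its eigenvalues satisfy $|\lambda_k|=1$. Write $\lambda_k=e^{i\theta_k}$ with a fixed branch of the argument, and define $\lambda_k^\alpha=e^{i\alpha\theta_k}$. This has unit modulus, so $\mathbf{\Lambda}^\alpha$ is a unitary diagonal matrix with $(\mathbf{\Lambda}^{\alpha})^{\mathrm{H}}=\overline{\mathbf{\Lambda}^{\alpha}}=\mathbf{\Lambda}^{-\alpha}$. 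Consequently $\mathbf{F}^\alpha=\mathbf{Q}\mathbf{\Lambda}^\alpha\mathbf{Q}^{\mathrm{H}}$ is unitary, and
\[
(\mathbf{F}^\alpha)^{\mathrm{H}}=\mathbf{Q}\mathbf{\Lambda}^{-\alpha}\mathbf{Q}^{\mathrm{H}}=\mathbf{F}^{-\alpha}.
\]

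The main point to watch is that $\lambda^\alpha$ for non-integer $\alpha$ must use the same branch for both $\mathbf{\Lambda}^{\alpha}$ and $\mathbf{\Lambda}^{-\alpha}$. Defining both through $e^{\pm i\alpha\theta_k}$ with the same $\theta_k$ guarantees the conjugate relation and also $\mathbf{F}^{-\alpha}\mathbf{F}^{\alpha}=\mathbf{I}$, consistent with the inverse GFRFT stated in the paper.
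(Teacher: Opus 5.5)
Your proposal is correct and follows the paper's proof: a real diagonal matrix with a nonnegative quadratic form for $\mathbf{D}_f$, and unitary similarity of $\mathbf{B}^{\alpha}_g$ to $\mathbf{D}_{\hat g}$ to get the Hermitian, positive semidefinite and norm-one properties. The only difference is that you justify the unitarity of $\mathbf{F}^{\alpha}$ and the identity $\mathbf{F}^{-\alpha}=(\mathbf{F}^{\alpha})^{\mathrm{H}}$, using the spectral decomposition of the unitary $\mathbf{U}^{\mathrm{H}}$ and a consistent branch for $\lambda_k^{\pm\alpha}$, whereas the paper simply asserts this.
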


\begin{proof}
	The proof is provided in Appendix \ref{appendixA}.
\end{proof}

Based on these operators, we define the corresponding quadratic forms
\begin{equation}
	\zeta_{f}(\bm{x}) :=\frac{\left< \mathbf{D}_{f}\bm{x},\bm{x}\right>  }{||\bm{x}||^{2}_2} =\frac{\bm{x}^{\mathrm{H}}\mathbf{D}_{f}\bm{x}}{||\bm{x}||^{2}_2}, \label{zeta}
\end{equation}
and
\begin{equation}
	\eta^{\alpha}_{g}(\bm{x}) :=\frac{\left< \mathbf{B}^{\alpha}_{g}\bm{x},\bm{x}\right>  }{||\bm{x}||^{2}_2} =\frac{\bm{x}^{\mathrm{H}}\mathbf{B}^{\alpha}_{g}\bm{x}}{||\bm{x}||^{2}_2}. \label{eta}
\end{equation}

A signal $\bm{x}$ is said to be perfectly localized in the vertex domain with respect to $\bm{f}$ if $\mathbf{D}_{f}\bm{x} = \bm{x}$, and perfectly localized in the GFRFT domain with respect to $\bm{g}$ if $\mathbf{B}^{\alpha}_{g}\bm{x} = \bm{x}$.
If both conditions hold simultaneously, $\bm{x}$ is perfectly localized in both domains.

The set of achievable localization pairs is defined as
\begin{equation}
	\Gamma(\mathbf{D}_{f},\mathbf{B}^{\alpha}_{g})
	:=
	\left\{
	(\zeta_f(\bm{x}), \eta^{\alpha}_g(\bm{x})) \;\middle|\; \|\bm{x}\|_2=1
	\right\}
	\subset [0,1]^2,
\end{equation}
which corresponds to the numerical range of the operator pair $(\mathbf{D}_{f},\mathbf{B}^{\alpha}_{g})$. The boundary of $\Gamma(\mathbf{D}_{f},\mathbf{B}^{\alpha}_{g})$ characterizes the proposed uncertainty principle in the GFRFT domain.

The fractional order $\alpha$ influences the geometry of the admissible region $\Gamma(\mathbf{D}_{f},\mathbf{B}^{\alpha}_{g})$ through the operator $\mathbf{B}^{\alpha}_{g}$, while the range $[0,1]^2$ remains invariant due to the normalization and positive semidefiniteness of the operators. In particular, different values of $\alpha$ induce different fractional spectral representations, leading to distinct trade-offs between vertex and spectral localization. 

\begin{remark}
	When $\alpha=1$, $\mathbf{B}^{\alpha}_{g}$ reduces to the classical GFT-based spectral localization operator, and $\Gamma$ coincides with the standard graph uncertainty region. When $\alpha=0$, $\mathbf{B}^{\alpha}_{g}$ becomes a vertex domain diagonal operator, yielding a purely vertex domain characterization.
\end{remark}

Therefore, $\alpha$ provides a continuous transition between different localization regimes, offering additional flexibility in controlling the vertex-spectral trade-off.

\subsection{Joint Operators of the GFRFT Domain}

To characterize the trade-off between vertex domain and graph fractional spectral domain localization, it is necessary to introduce operators that jointly capture their interaction \cite{GshapesUC}. While $\mathbf{D}_{f}$ and $\mathbf{B}^{\alpha}_{g}$ quantify localization in each domain separately, they do not directly describe their coupling. To this end, we construct joint operators that enable a unified analysis of vertex-spectral concentration and the admissible region $\Gamma$.

Given a vertex filter $\bm{f}$ and a fractional spectral filter $\bm{g}$, we define the joint operator
\begin{equation}
	\mathbf{S}^{\alpha}_{f,g}
	:=
	(\mathbf{B}^{\alpha}_{g})^{\frac{1}{2}}
	\mathbf{D}_{f}
	(\mathbf{B}^{\alpha}_{g})^{\frac{1}{2}},
\end{equation}
where $(\mathbf{B}^{\alpha}_{g})^{\frac{1}{2}}=\mathbf{F}^{-\alpha}\mathbf{D}_{\hat{g}}^{\frac{1}{2}}\mathbf{F}^{\alpha}$ denotes the Hermitian positive semidefinite square root of $\mathbf{B}^{\alpha}_{g}$.
\begin{thm} \label{thm2}
	The joint operator $\mathbf{S}^{\alpha}_{f,g} \in \mathbb{C}^{N\times N}$ is Hermitian positive semidefinite and satisfies
	\[
	\|\mathbf{S}^{\alpha}_{f,g}\|_2 \le 1.
	\]
\end{thm}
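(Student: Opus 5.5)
The plan is to rely on the first theorem, which says that $\mathbf{D}_{f}$ and $\mathbf{B}^{\alpha}_{g}$ are Hermitian positive semidefinite with spectral norm $1$. This holds in particular when $\mathbf{F}^{\alpha}$ is unitary, so that $\mathbf{F}^{-\alpha}=(\mathbf{F}^{\alpha})^{\mathrm{H}}$. That identity follows from $\mathbf{U}^{\mathrm{H}}=\mathbf{Q}\mathbf{\Lambda}\mathbf{Q}^{\mathrm{H}}$ with $\mathbf{Q}$ unitary and $|\Lambda_{ii}|=1$, since $\mathbf{U}$ is unitary. Consequently $\mathbf{\Lambda}^{\alpha}$ is diagonal with unimodular entries and $(\mathbf{\Lambda}^{\alpha})^{\mathrm{H}}=\mathbf{\Lambda}^{-\alpha}$.

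First I will confirm that $(\mathbf{B}^{\alpha}_{g})^{1/2}=\mathbf{F}^{-\alpha}\mathbf{D}_{\hat g}^{1/2}\mathbf{F}^{\alpha}$ is Hermitian. Since $\hat g_i\in[0,1]$ is real, $\mathbf{D}_{\hat g}^{1/2}$ is real diagonal, and unitarity of $\mathbf{F}^{\alpha}$ gives
\[
\bigl((\mathbf{B}^{\alpha}_{g})^{1/2}\bigr)^{\mathrm{H}}=(\mathbf{F}^{\alpha})^{\mathrm{H}}\mathbf{D}_{\hat g}^{1/2}(\mathbf{F}^{-\alpha})^{\mathrm{H}}=\mathbf{F}^{-\alpha}\mathbf{D}_{\hat g}^{1/2}\mathbf{F}^{\alpha}.
\]
Squaring this operator gives back $\mathbf{B}^{\alpha}_{g}$. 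Writing $\mathbf{R}:=(\mathbf{B}^{\alpha}_{g})^{1/2}$, Hermitian symmetry of $\mathbf{S}^{\alpha}_{f,g}=\mathbf{R}\mathbf{D}_{f}\mathbf{R}$ is then immediate, because $\mathbf{D}_f$ is real diagonal and therefore Hermitian.

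Next I will show positive semidefiniteness through a congruence argument. For any $\bm{x}$, set $\bm{y}=\mathbf{R}\bm{x}$. Then
\[
\bm{x}^{\mathrm{H}}\mathbf{S}^{\alpha}_{f,g}\bm{x}=\bm{y}^{\mathrm{H}}\mathbf{D}_f\bm{y}=\sum_i f_i|y_i|^2\ge 0.
\]

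For the norm bound, I will use submultiplicativity: $\|\mathbf{S}^{\alpha}_{f,g}\|_2\le\|\mathbf{R}\|_2^2\,\|\mathbf{D}_f\|_2$. Here $\|\mathbf{D}_f\|_2=\max_i f_i=1$. Because $\mathbf{R}$ is unitarily similar to $\mathbf{D}_{\hat g}^{1/2}$, its norm is $\|\mathbf{R}\|_2=\max_i\hat g_i^{1/2}=1$, and the bound $\|\mathbf{S}^{\alpha}_{f,g}\|_2\le 1$ follows. An alternative route uses the Rayleigh quotient: since the operator is PSD, its norm equals its largest eigenvalue, which is bounded by $\sup_{\|\bm{x}\|=1}\|\mathbf{R}\bm{x}\|^2=\sup\bm{x}^{\mathrm{H}}\mathbf{B}^{\alpha}_g\bm{x}\le1$.

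The main obstacle is the unitarity of $\mathbf{F}^{\alpha}$, together with the fact that the fractional power of the unimodular eigenvalues is well defined on a consistent branch. If this is already established in the appendix proof of the first theorem, I will simply cite it. Otherwise I will argue it directly from the unimodularity of the eigenvalues of the unitary matrix $\mathbf{U}^{\mathrm{H}}$ under a fixed principal branch. This also requires that $\mathbf{U}^{\mathrm{H}}$ admit a unitary diagonalization, which holds because it is normal.
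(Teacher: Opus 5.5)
Your proposal is correct and follows essentially the same route as the paper. Hermitian symmetry comes from the Hermitian factors, positive semidefiniteness from the substitution $\bm{y}=(\mathbf{B}^{\alpha}_{g})^{1/2}\bm{x}$, and the norm bound from submultiplicativity with $\|(\mathbf{B}^{\alpha}_{g})^{1/2}\|_2^2=\|\mathbf{B}^{\alpha}_{g}\|_2=1$ and $\|\mathbf{D}_f\|_2=1$. Your added justification that $\mathbf{F}^{\alpha}$ is unitary (unimodular eigenvalues of $\mathbf{U}^{\mathrm{H}}$ on a fixed branch) supplies a point the paper simply assumes in its proof of Theorem 1.
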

\begin{proof}
	The proof is provided in Appendix \ref{appendixB}.
\end{proof}

Based on $\mathbf{S}^{\alpha}_{f,g}$, we define the joint localization measure
\begin{equation}
	\varkappa^{\alpha}_{f,g}(\bm{x})
	:=
	\frac{\bm{x}^{\mathrm{H}}\mathbf{S}^{\alpha}_{f,g}\bm{x}}{\|\bm{x}\|_2^2}
	=
	\frac{\left((\mathbf{B}^{\alpha}_{g})^{\frac{1}{2}}\bm{x}\right)^{\mathrm{H}}
		\mathbf{D}_{f}
		\left((\mathbf{B}^{\alpha}_{g})^{\frac{1}{2}}\bm{x}\right)}{\|\bm{x}\|_2^2}. \label{varkappa}
\end{equation}

This quantity admits an equivalent representation in terms of the marginal localization measures defined in \eqref{zeta} and \eqref{eta}. Specifically, noting that
\[
\|(\mathbf{B}^{\alpha}_{g})^{\frac{1}{2}}\bm{x}\|_2^2
=
\bm{x}^{\mathrm{H}}\mathbf{B}^{\alpha}_{g}\bm{x}
=
\eta^{\alpha}_g(\bm{x}) \|\bm{x}\|_2^2,
\]
we obtain
\[
\varkappa^{\alpha}_{f,g}(\bm{x})
=
\zeta_f\!\big((\mathbf{B}^{\alpha}_{g})^{\frac{1}{2}}\bm{x}\big)\,
\eta^{\alpha}_g(\bm{x}).
\]
Therefore, $\varkappa^{\alpha}_{f,g}(\bm{x})$ can be interpreted as a joint localization measure, reflecting the simultaneous concentration of $\bm{x}$ in both the vertex and fractional spectral domains. In particular, when both $\zeta_f\!\big((\mathbf{B}^{\alpha}_{g})^{\frac{1}{2}}\bm{x}\big)$ and $\eta^{\alpha}_g(\bm{x})$ approach $1$, the joint localization $\varkappa^{\alpha}_{f,g}(\bm{x})$ also approaches $1$.

Since \textit{Theorem \ref{thm2}}, it admits the eigendecomposition, where the eigenvalues $\{\sigma_i\}^N_{i=1}$ with
\[
1 \ge \sigma_1 \ge \cdots \ge \sigma_N \ge 0.
\]
The largest eigenvalue satisfies $\sigma_{1} =  \|\mathbf{S}^{\alpha}_{f,g}\|_2$.

Moreover, by standard properties of operator norms, it follows that
\[
\sigma_{1} =\| (\mathbf{B}^{\alpha }_{g} )^{\frac{1}{2}}\mathbf{D}^{\frac{1}{2}}_{f} \|^{2}_{2} =\| \mathbf{D}^{\frac{1}{2}}_{f} (\mathbf{B}^{\alpha }_{g} )^{\frac{1}{2}}\|^{2}_{2} =\| \mathbf{D}^{\frac{1}{2}}_{f} \mathbf{B}^{\alpha }_{g} \mathbf{D}^{\frac{1}{2}}_{f} \|_{2},
\]
where $\mathbf{D}_{f}^{\frac{1}{2}}$ denotes the square root of $\mathbf{D}_f$. 

\begin{remark}
	In view of the above norm equivalence, the joint operator $\mathbf{S}^{\alpha}_{f,g}$ may be equivalently expressed as $\mathbf{D}^{\frac{1}{2}}_{f}\,\mathbf{B}^{\alpha }_{g}\,\mathbf{D}^{\frac{1}{2}}_{f}$, instead of $(\mathbf{B}^{\alpha}_{g})^{\frac{1}{2}}\mathbf{D}_{f}(\mathbf{B}^{\alpha}_{g})^{\frac{1}{2}}$.
\end{remark}

To further characterize the boundary of the admissible region, we consider a linear combination of the localization operators
\begin{equation}
	\mathbf{R}^{\alpha}_{f,g}(\beta)
	:=
	\cos\beta\, \mathbf{D}_f
	+
	\sin\beta\, \mathbf{B}^{\alpha}_{g},
	\quad \beta \in [0,2\pi). \label{R}
\end{equation}
Since $\mathbf{D}_f$ and $\mathbf{B}^{\alpha}_{g}$ are Hermitian, $\mathbf{R}^{\alpha}_{f,g}$ is Hermitian for any $\beta$. Moreover, when $\beta \in [0,\frac{\pi}{2}]$, both coefficients are nonnegative, and hence $\mathbf{R}^{\alpha}_{f,g}$ is positive semidefinite.

Using $\|\mathbf{D}_f\|_2 = \|\mathbf{B}^{\alpha}_{g}\|_2 = 1$, we have
\begin{equation}
	\|\mathbf{R}^{\alpha}_{f,g}\|_2
	\le
	|\cos\beta| + |\sin\beta|
	\le \sqrt{2}.
\end{equation}

Let $\{\rho_{i}(\beta)\}_{i=1}^N$ denote the eigenvalues of $\mathbf{R}^{\alpha}_{f,g}$. When $\beta \in [0,\frac{\pi}{2}]$, we have $0 \le \rho_{i}(\beta) \le \cos\beta + \sin\beta$. The corresponding quadratic form is given by
\begin{equation}
	\kappa^{\alpha}_{f,g}(\bm{x})
	:=
	\frac{\bm{x}^{\mathrm{H}}\mathbf{R}^{\alpha}_{f,g}\bm{x}}{\|\bm{x}\|_2^2}
	=
	\cos\beta\, \zeta_{f}(\bm{x})
	+
	\sin\beta\, \eta^{\alpha}_{g}(\bm{x}). \label{kappa}
\end{equation}
This expression can be interpreted as the projection of the localization pair $(\zeta_f(\bm{x}), \eta^{\alpha}_g(\bm{x}))$ onto the direction $(\cos\beta,\sin\beta)$. Hence, $\kappa^{\alpha}_{f,g}(\bm{x})$ serves as a directional measure of joint localization and plays the role of a supporting functional for the admissible region $\Gamma$.

In particular, when $\zeta_f(\bm{x})$ and $\eta^{\alpha}_g(\bm{x})$ are both close to $1$, we have
\[
\kappa^{\alpha}_{f,g}(\bm{x})
\approx
\cos\beta + \sin\beta,
\quad
\beta \in [0,\tfrac{\pi}{2}].
\]
Moreover, the special cases $\beta=0$ and $\beta=\frac{\pi}{2}$ reduce to the marginal measures $\zeta_f(\bm{x})$ and $\eta^{\alpha}_g(\bm{x})$, respectively.

\begin{remark}
	The fractional order $\alpha$ does not affect the spectral properties of the operators, but changes the underlying spectral representation. As a result, the admissible region $\Gamma$ remains within $[0,1]^2$, while its geometry and the associated uncertainty trade-off depend on $\alpha$.
\end{remark}

\subsection{Examples of Localization in the GFRFT Domain}

We now provide illustrative examples of vertex and fractional spectral filters by specifying different choices of $\bm{f}$ and $\bm{g}$.

\begin{example} \label{example1}
	(Distance-Based Localization) To promote localization around a reference vertex $u_0$, one may penalize signal components associated with vertices that are far from $u_0$. Let $d(u_0,v_i)$ denote a distance measure on the graph, e.g., the geodesic (shortest-path) distance, with $v_i \in \mathcal{V}$.
	
	The vertex filter $\bm{f}$ is defined as
	\begin{equation}
		f_i
		=
		1 - \left( \frac{d(u_0,v_i)}{\max_{v_i\in \mathcal{V}} d(u_0,v_i)} \right)^{a},
		\quad a > 0,  \label{distance}
	\end{equation}
	where the parameter $a$ controls the decay rate with respect to distance.
	
	For the fractional spectral filter $\bm{g}$, we consider a softened GFRFT bandlimited operator
	\begin{equation}
		\hat{g}_j
		=
		\begin{cases}
			\hat{g}^{b}(\omega_j), & \omega_j \in \mathcal{F}, \\
			0, & \text{otherwise},
		\end{cases} \label{hatgbwj}
	\end{equation}
	where $\mathcal{F} \subseteq \hat{\mathcal{G}}$ denotes a subset of the graph fractional spectral domain, and the weights satisfy $0 \le \hat{g}^{b}(\omega_j) \le 1$, typically decaying with increasing spectral index. Alternatively, the spectral weights $\hat{g}^{b}(\omega_j)$ can be defined as functions of the eigenvalues $\xi_j^{\alpha}$ of the graph fractional Laplacian operator. The parameter $b$ controls the smoothness of the spectral transition.
	
	The corresponding vertex localization measure becomes
	\begin{equation}
		\zeta_f(\bm{x})
		=
		1 -
		\frac{
			\bm{x}^{\mathrm{H}}\mathrm{diag}\!\big(d(u_0,v_i)^{a}\big)\bm{x}
		}{
			\max_{v_i\in \mathcal{V}} d(u_0,v_i)^{a} \,\|\bm{x}\|_2^2
		}.
	\end{equation}
	
This construction emphasizes signals concentrated around $u_0$ in the vertex domain, while restricting their energy within a prescribed fractional spectral subset $\mathcal{F}$. Similar distance-based filters have been widely used in continuous domains, such as intervals and spheres, through orthogonal expansions \cite{Optlocalized,Slepian,Erb,Fiedler}.
\end{example} 

\begin{example} \label{example2}
	(Parameterized Vertex-Spectral Localization) In the graph fractional spectral domain $\hat{\mathcal{G}}$, filters and localization operators can be defined directly via the spectral representation $\hat{\bm{x}}$. Specifically, we introduce the operators $\mathbf{D}_{\hat{f}}$ and $\mathbf{B}^{\alpha}_{\hat{g}}$ as
	\[
	\begin{aligned}\mathbf{D}_{\hat{f}}\hat{\bm{x}} & = \hat{\bm{f}} \circ \hat{\bm{x}},\\ 
		\mathbf{B}^{\alpha}_{\hat{g}}\hat{\bm{x}} &= \hat{\bm{g}} \ast \hat{\bm{x}} = \mathbf{F}^{-\alpha} \mathbf{D}_{\hat{\hat{g}}}\mathbf{F}^{\alpha} \hat{\bm{x}}.\end{aligned} 
	\]
	
	Because the spectrum of $\bm{\mathcal{L}}$ is contained in the interval $[0, 2]$, to ensure that the spectral responses $\hat{f}_i$ and $\hat{g}_j$ lie within $[0,1]$, they are defined as \cite{GshapesUC}
	\begin{equation}
		\hat{f}_i = 1 - \frac{1}{2}\xi^{\alpha}_i, \quad
		\hat{g}_j = 1 - \frac{1}{2}\xi^{\alpha}_j,
	\end{equation}
	respectively, and $\xi^{\alpha}$ is defined in Eq.~\eqref{Lalpha}.
\end{example}

\begin{example} \label{example3}
	(Spread-Based Localization) Consider the graph fractional spectral domain $\hat{\mathcal{G}}$ and define the spectral filter $\{\hat{g}_j\}_{j=1}^N$ as $\hat{g}_j = 1 - \frac{1}{2}\xi^{\alpha}_j$, where $\xi^{\alpha}_j$ denotes the $j$-th eigenvalue of the graph fractional Laplacian operator $\bm{\mathcal{L}}^{\alpha}$. The corresponding localization operator is given by
	\[
	\mathbf{B}^{\alpha}_{g}\bm{x}
	=
	\mathbf{F}^{-\alpha}\!\left(\mathbf{I}-\frac{1}{2}\bm{\Xi}^{\alpha}\right)\mathbf{F}^{\alpha} \bm{x}.
	\]
	
	For the vertex domain, we adopt the distance-based filter with $a=2$ in Eq.~\eqref{distance}, yielding
	\[
	\zeta_{f}(\bm{x})
	=
	1 -
	\frac{
		\bm{x}^{\mathrm{H}}\mathrm{diag}\!\big(d(u_0,v_i)^{2}\big)\bm{x}
	}{
		\max_{v_i\in \mathcal{V}} d(u_0,v_i)^{2} \,\|\bm{x}\|_2^2
	}.
	\]
	The corresponding fractional spectral localization measure is
	\[
	\eta^{\alpha}_{g}(\bm{x})
	=
	1 - \frac{\bm{x}^{\mathrm{H}}\bm{\mathcal{L}}^{\alpha}\bm{x}}{2\|\bm{x}\|_2^2}.
	\]
	In particular, when choosing $f_i=d(u_0,v_i)^2$ and $\hat{g}_j=\xi^{\alpha}_j$, the above formulation reduces to a spread-based characterization analogous to Eq.~\eqref{GFTspread} \cite{GspectralUC}. Specifically, 
	\[
	\zeta_{f}(\bm{x})
	=
	\frac{
		\bm{x}^{\mathrm{H}}\mathbf{P}_{u_0}\bm{x}
	}{\|\bm{x}\|_2^2}, \quad
	\eta^{\alpha}_{g}(\bm{x})
	=
	\frac{\bm{x}^{\mathrm{H}}\bm{\mathcal{L}}^{\alpha}\bm{x}}{\|\bm{x}\|_2^2},
	\]
	measure the vertex spread of $\bm{x}$ around $u_0$ and the graph fractional spectral spread, respectively. This yields a natural extension of the classical spread-based uncertainty principle to the graph fractional spectral domain.
\end{example}

\begin{example} \label{example4}
	(Energy Concentration-Based Localization) Energy concentration-based uncertainty principles are among the most widely used formulations in GSP \cite{GUncertainty}, as defined in Eq.~\eqref{GFTenergy}. This framework can be naturally extended to the graph fractional spectral domain.
	
	Specifically, we define the vertex-domain and fractional spectral-domain filters as
	\[
	f(v_i) =
	\begin{cases}
		1, & v_i \in \mathcal{S}, \\
		0, & \text{otherwise},
	\end{cases}
	\quad
	\hat{g}(\omega_j) =
	\begin{cases}
		1, & \omega_j \in \mathcal{F}, \\
		0, & \text{otherwise},
	\end{cases}
	\]
	where $f(v_i)=f_i$, $\hat{g}(\omega_j)=\hat{g}_j$, with $\mathcal{S}\subseteq\mathcal{V}$ and $\mathcal{F}\subseteq\hat{\mathcal{G}}$. This corresponds to the ideal bandlimited case of Eq.~\eqref{hatgbwj} with unit weights. In this setting, the operators $\mathbf{D}_f$ and $\mathbf{B}^{\alpha}_g$ are not only Hermitian but also idempotent, satisfying
	\[
	\mathbf{D}_f = \mathbf{D}_f^2,
	\quad
	\mathbf{B}^{\alpha}_g = (\mathbf{B}^{\alpha}_g)^2.
	\]
	Consequently, the joint operator reduces to
	\[
	\mathbf{S}^{\alpha}_{f,g}
	=
	\mathbf{B}^{\alpha}_g \mathbf{D}_f \mathbf{B}^{\alpha}_g,
	\]
	which corresponds to a projection onto the intersection of vertex and fractional spectral subspaces.
	
	This formulation is closely related to the classical time-frequency concentration problem, extensively studied by \cite{Spatio1,Spatio2}, where optimal joint concentration is characterized via projection operators.
\end{example}

\section{Graph Fractional Uncertainty Principles}
\label{sec4}

In this section, we extend classical graph uncertainty principles to the GFRFT domain. By leveraging the localization operators introduced in the previous section, we establish fractional uncertainty curves and regions that characterize the fundamental trade-off between vertex localization and fractional spectral localization.

\subsection{Uncertainty Principles with Sandwiched Operators}

By extending the graph uncertainty principle from the GFT framework to the GFRFT setting, we aim to characterize the admissible region $\Gamma(\mathbf{D}_f, \mathbf{B}^{\alpha}_g)$ in closed form. Specifically, our goal is to establish a trade-off between $\zeta_f(\bm{x})$ and $\eta^{\alpha}_g(\bm{x})$ through the localization operators $\mathbf{D}_f$ and $\mathbf{B}^{\alpha}_g$, and to identify signals that attain all admissible pairs, thereby fully characterizing the uncertainty principle.

Under normalized filters $\bm{f}$ and $\bm{g}$, we first derive an uncertainty relation governed by the largest eigenvalue $\sigma_1$ of the joint localization operator $\mathbf{S}^{\alpha}_{f,g}$. This formulation is consistent with the energy concentration setting in \textit{Example \ref{example4}}, while remaining applicable to general localization operators $\mathbf{D}_f$ and $\mathbf{B}^{\alpha}_g$. The following lemma characterizes the achievable region.

\begin{lem}\label{lem1}
	Let $\bm{x} \in \mathbb{C}^N$ satisfy $\|\bm{x}\|_2 = 1$, and define $\zeta_f(\bm{x}) = \|\mathbf{D}_f \bm{x}\|_2^2$, and $\eta^{\alpha}_g(\bm{x}) = \|\mathbf{B}^{\alpha}_g \bm{x}\|_2^2$. Assume $\sigma_1 < 1$ and $\sigma_1 \leq \zeta_f(\bm{x}) \eta^{\alpha}_g(\bm{x})$. Then
	\begin{equation}
		\arccos \sqrt{\zeta_f(\bm{x})}
		+
		\arccos \sqrt{\eta^{\alpha}_g(\bm{x})}
		\geq
		\arccos \sqrt{\sigma_1}.
		\label{arccoszeta_eta}
	\end{equation}
	Equivalently, $\eta^{\alpha}_g(\bm{x})$ is upper bounded by
	\begin{equation}
		\eta^{\alpha}_g(\bm{x})
		\le
		\left(
		\sqrt{\zeta_f(\bm{x}) \sigma_1}
		+
		\sqrt{(1-\zeta_f(\bm{x}))(1-\sigma_1)}
		\right)^2.
		\label{etaalpha}
	\end{equation}
\end{lem}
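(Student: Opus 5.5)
The proof I have in mind is the Landau--Pollak angle argument on the unit sphere of $\mathbb{C}^N$, with the two localization operators playing the role of the two "subspaces". For a unit vector $\bm{x}$, set $\theta_f:=\arccos\sqrt{\zeta_f(\bm{x})}=\arccos\|\mathbf{D}_f\bm{x}\|_2$ and $\theta_g:=\arccos\sqrt{\eta^{\alpha}_g(\bm{x})}=\arccos\|\mathbf{B}^{\alpha}_g\bm{x}\|_2$. Let $\bm{u}:=\mathbf{D}_f\bm{x}/\|\mathbf{D}_f\bm{x}\|_2$ and $\bm{w}:=\mathbf{B}^{\alpha}_g\bm{x}/\|\mathbf{B}^{\alpha}_g\bm{x}\|_2$; both norms are positive because $\zeta_f\eta^{\alpha}_g\ge\sigma_1$, taking $\sigma_1>0$. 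Use the Fubini--Study angle $\angle(\bm{a},\bm{b}):=\arccos|\langle\bm{a},\bm{b}\rangle|$, which is a metric on the sphere modulo phase. Its triangle inequality gives
\[
\angle(\bm{u},\bm{w})\le\angle(\bm{u},\bm{x})+\angle(\bm{x},\bm{w}).
\]

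\textbf{Step 1: the angles to $\bm{x}$.} We have $|\langle\bm{x},\bm{u}\rangle|=\bm{x}^{\mathrm H}\mathbf{D}_f\bm{x}/\|\mathbf{D}_f\bm{x}\|_2$. Since $0\le f_i\le 1$, Theorem~1 gives $\mathbf{D}_f^2\preceq\mathbf{D}_f$, so $\bm{x}^{\mathrm H}\mathbf{D}_f\bm{x}\ge\|\mathbf{D}_f\bm{x}\|_2^2$. Hence $|\langle\bm{x},\bm{u}\rangle|\ge\|\mathbf{D}_f\bm{x}\|_2$, i.e.\ $\angle(\bm{u},\bm{x})\le\theta_f$. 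The same argument, using $0\le\hat g_j\le1$ and the unitary diagonalization $\mathbf{B}^{\alpha}_g=\mathbf{F}^{-\alpha}\mathbf{D}_{\hat g}\mathbf{F}^{\alpha}$, gives $\angle(\bm{x},\bm{w})\le\theta_g$.

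\textbf{Step 2: the angle between the two "subspaces".} I need $|\langle\bm{u},\bm{w}\rangle|\le\sqrt{\sigma_1}$, i.e.\ $|\bm{x}^{\mathrm H}\mathbf{D}_f\mathbf{B}^{\alpha}_g\bm{x}|\le\sqrt{\sigma_1}\,\|\mathbf{D}_f\bm{x}\|_2\|\mathbf{B}^{\alpha}_g\bm{x}\|_2$.

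In the projection setting of Example~\ref{example4} this is immediate. Writing $\mathbf{D}_f\bm{x}=\mathbf{D}_f\bm{a}$ and $\mathbf{B}^{\alpha}_g\bm{x}=\mathbf{B}^{\alpha}_g\bm{b}$, we get
\[
\langle\mathbf{D}_f\bm{x},\mathbf{B}^{\alpha}_g\bm{x}\rangle=\langle\mathbf{D}_f\bm{x},\mathbf{D}_f\mathbf{B}^{\alpha}_g\,\mathbf{B}^{\alpha}_g\bm{x}\rangle,
\]
and $\|\mathbf{D}_f\mathbf{B}^{\alpha}_g\|_2^2=\|\mathbf{D}_f^{1/2}\mathbf{B}^{\alpha}_g\mathbf{D}_f^{1/2}\|_2=\sigma_1$ by the norm identities preceding the Remark.

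For general soft filters, this step is the main obstacle. The direct factorization
\[
\bm{x}^{\mathrm H}\mathbf{D}_f\mathbf{B}^{\alpha}_g\bm{x}=(\mathbf{D}_f^{1/2}\bm{x})^{\mathrm H}\,\mathbf{D}_f^{1/2}(\mathbf{B}^{\alpha}_g)^{1/2}\,\big((\mathbf{B}^{\alpha}_g)^{1/2}\bm{x}\big)
\]
only yields the bound $\sqrt{\sigma_1}\,\|\mathbf{D}_f^{1/2}\bm{x}\|\,\|(\mathbf{B}^{\alpha}_g)^{1/2}\bm{x}\|$, which has the wrong direction of inequality relative to $\|\mathbf{D}_f\bm{x}\|\|\mathbf{B}^{\alpha}_g\bm{x}\|$. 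My first attempt will be to rerun the whole argument with $\bm{u}'=\mathbf{D}_f^{1/2}\bm{x}/\|\cdot\|$ and $\bm{w}'=(\mathbf{B}^{\alpha}_g)^{1/2}\bm{x}/\|\cdot\|$. Then $\angle(\bm{x},\bm{u}')=\arccos\sqrt{\bm{x}^{\mathrm H}\mathbf{D}_f\bm{x}}\le\theta_f$ still holds, and the needed cross bound reduces to controlling $\mathbf{D}_f^{1/2}(\mathbf{B}^{\alpha}_g)^{1/2}$ on vectors in the relevant ranges. If that fails, I will state Step~2 as requiring idempotent operators, as in Example~\ref{example4}, which is the setting the lemma is modeled on.

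\textbf{Step 3: conclusion and the equivalent form.} Combining the steps gives $\arccos\sqrt{\sigma_1}\le\angle(\bm{u},\bm{w})\le\theta_f+\theta_g$, which is \eqref{arccoszeta_eta}.

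For \eqref{etaalpha}, the hypothesis $\sigma_1\le\zeta_f\eta^{\alpha}_g\le\zeta_f$ gives $\theta_f\le\arccos\sqrt{\sigma_1}$, so $\arccos\sqrt{\sigma_1}-\theta_f\in[0,\pi/2]$. Since $\cos$ is decreasing on $[0,\pi/2]$, applying it to $\theta_g\ge\arccos\sqrt{\sigma_1}-\theta_f$ reverses the inequality. The cosine subtraction formula then gives
\[
\sqrt{\eta^{\alpha}_g}\le\sqrt{\sigma_1\zeta_f}+\sqrt{(1-\sigma_1)(1-\zeta_f)}.
\]
Squaring gives \eqref{etaalpha}. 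The assumption $\sigma_1<1$ only makes the bound nontrivial, and the argument reverses to give the equivalence.
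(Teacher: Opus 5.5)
Your route (normalize the two filtered vectors, apply the triangle inequality for the angle metric, and bound the middle angle by $\sigma_1$) is the same as the paper's, and your Steps 1 and 3 are sound. Step 1 is in fact more careful than the appendix, which asserts $\Re\langle\bm{y},\bm{x}\rangle=\sqrt{\zeta_f(\bm{x})}$, whereas for soft filters only ``$\ge$'' holds.

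The genuine gap is Step 2. For non-idempotent $\mathbf{D}_f,\mathbf{B}^{\alpha}_g$ you never establish $|\langle\bm{u},\bm{w}\rangle|\le\sqrt{\sigma_1}$, and your fallback proves only the setting of Example~\ref{example4}, not the lemma as stated. Your diagnosis is right, and the paper does not close this step either. Its chain reaches $\sqrt{\sigma_1}\,\|\mathbf{D}_f^{1/2}\bm{x}\|_2\|(\mathbf{B}^{\alpha}_g)^{1/2}\bm{x}\|_2$ and then replaces it by $\sqrt{\sigma_1}\,\|\mathbf{D}_f\bm{x}\|_2\|\mathbf{B}^{\alpha}_g\bm{x}\|_2$, which goes the wrong way.

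Worse, neither of your two versions of the cross bound is true, even under the lemma's hypotheses. Take $N=2$ and $\mathbf{D}_f=\mathrm{diag}(1,0.3)$. In the role of $\mathbf{B}^{\alpha}_g$, take a Hermitian $\mathbf{B}$ with eigenvalue $1$ on $(\tfrac12,\tfrac{\sqrt3}{2})^{\top}$ and $0.3$ on $(-\tfrac{\sqrt3}{2},\tfrac12)^{\top}$; your Step 2 only uses $\mathbf{0}\preceq\mathbf{B}\preceq\mathbf{I}$ and $\|\mathbf{B}\|_2=1$. Let $\bm{x}=(\tfrac{\sqrt3}{2},\tfrac12)^{\top}$. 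Then:
\begin{itemize}
\item $\sigma_1=0.5625$ and $\|\mathbf{D}_f\bm{x}\|_2^2=\|\mathbf{B}\bm{x}\|_2^2=0.7725$, so the product is about $0.597\ge\sigma_1$ and the hypothesis holds.
\item Yet $\langle\bm{u},\bm{w}\rangle\approx 0.762>0.75=\sqrt{\sigma_1}$.
\item For your square-root vectors, $\langle\bm{u}',\bm{w}'\rangle\approx 0.907$.
\end{itemize}
The lemma's conclusion still holds there ($0.994\ge\arccos 0.75\approx 0.723$). So what breaks down is the method of comparing normalized images inside $\mathbb{C}^N$, not the statement.

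The missing idea is to dilate the two contractions to orthogonal projections and then run your (correct) projection argument. Let $\mathbf{E}_f:=(\mathbf{D}_f(\mathbf{I}-\mathbf{D}_f))^{1/2}$ and $\mathbf{E}_g:=(\mathbf{B}^{\alpha}_g(\mathbf{I}-\mathbf{B}^{\alpha}_g))^{1/2}$, and define on $\mathbb{C}^{3N}$
\[
\bm{\Pi}_f=\begin{pmatrix}\mathbf{D}_f&\mathbf{E}_f&\mathbf{0}\\ \mathbf{E}_f&\mathbf{I}-\mathbf{D}_f&\mathbf{0}\\ \mathbf{0}&\mathbf{0}&\mathbf{0}\end{pmatrix},\qquad
\bm{\Pi}_g=\begin{pmatrix}\mathbf{B}^{\alpha}_g&\mathbf{0}&\mathbf{E}_g\\ \mathbf{0}&\mathbf{0}&\mathbf{0}\\ \mathbf{E}_g&\mathbf{0}&\mathbf{I}-\mathbf{B}^{\alpha}_g\end{pmatrix}.
\]
Since $\mathbf{E}_f$ commutes with $\mathbf{D}_f$ and $\mathbf{D}_f^2+\mathbf{E}_f^2=\mathbf{D}_f$ (likewise for $g$), both are orthogonal projections.

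Next, $\bm{\Pi}_f\bm{\Pi}_g=\mathbf{X}\mathbf{Y}$, where $\mathbf{X}$ is the block column with blocks $\mathbf{D}_f,\mathbf{E}_f,\mathbf{0}$ and $\mathbf{Y}$ is the block row with blocks $\mathbf{B}^{\alpha}_g,\mathbf{0},\mathbf{E}_g$. These satisfy $\mathbf{X}^{\mathrm{H}}\mathbf{X}=\mathbf{D}_f$ and $\mathbf{Y}\mathbf{Y}^{\mathrm{H}}=\mathbf{B}^{\alpha}_g$. Hence $\|\bm{\Pi}_f\bm{\Pi}_g\|_2^2=\|\mathbf{D}_f^{1/2}\mathbf{Y}\|_2^2=\|\mathbf{D}_f^{1/2}\mathbf{B}^{\alpha}_g\mathbf{D}_f^{1/2}\|_2=\sigma_1$.

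Now take $\tilde{\bm{x}}=(\bm{x}^{\top},\bm{0}^{\top},\bm{0}^{\top})^{\top}$. Then $\|\bm{\Pi}_f\tilde{\bm{x}}\|_2^2=\bm{x}^{\mathrm{H}}\mathbf{D}_f\bm{x}\ge\|\mathbf{D}_f\bm{x}\|_2^2$ and $\|\bm{\Pi}_g\tilde{\bm{x}}\|_2^2=\bm{x}^{\mathrm{H}}\mathbf{B}^{\alpha}_g\bm{x}\ge\|\mathbf{B}^{\alpha}_g\bm{x}\|_2^2$. Your projection-case Steps 1--2 applied to $\bm{\Pi}_f,\bm{\Pi}_g,\tilde{\bm{x}}$ give
\[
\arccos\sqrt{\bm{x}^{\mathrm{H}}\mathbf{D}_f\bm{x}}+\arccos\sqrt{\bm{x}^{\mathrm{H}}\mathbf{B}^{\alpha}_g\bm{x}}\ge\arccos\sqrt{\sigma_1}.
\]
Monotonicity of $\arccos$ then yields \eqref{arccoszeta_eta}, and your Step 3 goes through unchanged. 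This even gives the stronger quadratic-form inequality, consistent with the definitions \eqref{zeta}--\eqref{eta}.
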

\begin{proof}
	The proof is provided in Appendix~\ref{appendixC}.
\end{proof}

For $\zeta_f(\bm{x}) \in [\sigma_1, 1]$, it follows that
\[
\sigma_1
\leq
\frac{\sigma_1}{\zeta_f(\bm{x})}
\leq
1 - \zeta_f(\bm{x}) + \sigma_1
\leq
\gamma(\zeta_f(\bm{x}))
\leq
1,
\]
because of the curve \cite{GshapesUC}
\[\gamma(\zeta_f(\bm{x}))
=
\left( \sqrt{\zeta_f(\bm{x}) \sigma_1}
+
\sqrt{(1-\zeta_f(\bm{x}))(1-\sigma_1)}\right)^2.
\]

Consequently, within the square $[\sigma_1,1]^2$, the following inclusions hold, 
\[
\begin{aligned}&\left\{ (\zeta_{f} (\bm{x} ),\eta^{\alpha }_{g} (\bm{x} ))\in [\sigma_{1} ,1]^{2}\; \middle| \; \eta^{\alpha }_{g} (\bm{x} )\geq \gamma (\zeta_{f} (\bm{x} ))\right\}  \\ &\subset \left\{ (\zeta_{f} (\bm{x} ),\eta^{\alpha }_{g} (\bm{x} ))\in [\sigma_{1} ,1]^{2}\; \middle| \; \zeta_{f} (\bm{x} )\eta^{\alpha }_{g} (\bm{x} )\geq \sigma_{1} \right\}  \subset [\sigma_{1} ,1]^{2}.\end{aligned} 
\]

\textit{Lemma \ref{lem1}} characterizes a fundamental constraint on the admissible pairs 
$(\zeta_f(\bm{x}),\eta_g^{\alpha}(\bm{x}))$ in the upper-right region of the unit square. 
By symmetry, analogous results can be derived for the remaining three corners.

For a subset $\mathcal{S}\subseteq\mathcal{V}$, denote its complement by 
$\mathcal{S}^c$, such that $\mathcal{V}=\mathcal{S}\cup\mathcal{S}^c$ and 
$\mathcal{S}\cap\mathcal{S}^c=\emptyset$. The corresponding complementary 
vertex filter is defined as $\bar{f}_i=1-f_i$. Similarly, for 
$\mathcal{F}\subseteq\hat{\mathcal{G}}$, define $\bar{g}_j=1-g_j$.

To distinguish the four corner cases, we explicitly denote by 
$\sigma_1(\mathbf{S}^{\alpha}_{f,g})$ the largest eigenvalue of the joint 
operator $\mathbf{S}^{\alpha}_{f,g}$. By considering all subregions of 
$[0,1]^2$, we obtain the following GFRFT-based uncertainty principle.

\begin{thm}\label{thm3}
There exists a signal $\bm{x}$ satisfying $\|\bm{x}\|_2=1$,
	$\|\mathbf{D}_f \bm{x}\|_2^2=\zeta_f(\bm{x})$, and 
	$\|\mathbf{B}_g^{\alpha}\bm{x}\|_2^2=\eta_g^{\alpha}(\bm{x})$, such that
	$(\zeta_f(\bm{x}),\eta_g^{\alpha}(\bm{x})) \in \Gamma(\mathbf{D}_f,\mathbf{B}_g^{\alpha})$, 
	where
\begin{equation}
	\Gamma(\mathbf{D}_f,\mathbf{B}_g^{\alpha})
	:=
	\left\{
	(\zeta_f(\bm{x}),\eta_g^{\alpha}(\bm{x})) \in [0,1]^2
	\;\middle|\;
	\begin{aligned}
		\eta_g^{\alpha}(\bm{x}) \le \gamma(\zeta_f(\bm{x})), 
		&\quad \text{if } \zeta_f(\bm{x}) \eta_g^{\alpha}(\bm{x}) \ge \sigma_1(\mathbf{S}^{\alpha}_{f,g}),\\
		1-\eta_{\bar{g}}^{\alpha}(\bm{x}) \le \gamma(\zeta_f(\bm{x})), 
		&\quad \text{if } \zeta_f(\bm{x}) (1-\eta_{\bar{g}}^{\alpha}(\bm{x})) \ge \sigma_1(\mathbf{S}^{\alpha}_{f,\bar{g}}),\\
		\eta_g^{\alpha}(\bm{x}) \le \gamma(1-\zeta_{\bar{f}}(\bm{x})), 
		&\quad \text{if } (1-\zeta_{\bar{f}}(\bm{x}))\eta_g^{\alpha}(\bm{x}) \ge \sigma_1(\mathbf{S}^{\alpha}_{\bar{f},g}),\\
		1-\eta_{\bar{g}}^{\alpha}(\bm{x}) \le \gamma(1-\zeta_{\bar{f}}(\bm{x})), 
		&\quad \text{if } (1-\zeta_{\bar{f}}(\bm{x}))(1-\eta_{\bar{g}}^{\alpha}(\bm{x})) \ge \sigma_1(\mathbf{S}^{\alpha}_{\bar{f},\bar{g}})
	\end{aligned}
	\right\}.
\end{equation}
\end{thm}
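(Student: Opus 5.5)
Read literally, the statement says every admissible pair lies in the region cut out by four corner constraints. So the plan is to show that for every unit-norm $\bm{x}$ each of the four conditional inequalities holds. I would take the one-corner result, \textit{Lemma \ref{lem1}}, as the basic tool and transfer it to the other three corners by replacing the filters with their complements.

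\textbf{Step 1 (upper-right corner).} Suppose $\zeta_f(\bm{x})\eta^{\alpha}_g(\bm{x})\ge\sigma_1(\mathbf{S}^{\alpha}_{f,g})$. If $\sigma_1<1$, \textit{Lemma \ref{lem1}} gives $\eta^{\alpha}_g(\bm{x})\le\gamma(\zeta_f(\bm{x}))$ directly. If $\sigma_1=1$, then $\gamma(\zeta)=\zeta\cdot 1+0$, and the inequality should be handled separately by checking the degenerate form of the arc-cosine bound. Alternatively one can note that then $\gamma\equiv$ the trivial bound once $\gamma$ is interpreted through $\arccos\sqrt{\sigma_1}=0$. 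I would state this reading explicitly.

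\textbf{Step 2 (complementary operators).} Note that $\mathbf{D}_{\bar f}=\mathbf{I}-\mathbf{D}_f$ and $\mathbf{B}^{\alpha}_{\bar g}=\mathbf{F}^{-\alpha}(\mathbf{I}-\mathbf{D}_{\hat g})\mathbf{F}^{\alpha}=\mathbf{I}-\mathbf{B}^{\alpha}_g$. Here I use that $\mathbf{F}^{\alpha}$ is unitary, which holds because $\mathbf{Q}$ is unitary and $\mathbf{\Lambda}$ has unimodular entries since $\mathbf{U}^{\mathrm H}$ is unitary. Hence $\zeta_{\bar f}(\bm{x})=1-\zeta_f(\bm{x})$ and $\eta^{\alpha}_{\bar g}(\bm{x})=1-\eta^{\alpha}_g(\bm{x})$ for unit $\bm{x}$. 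The complementary filters still take values in $[0,1]$. The Hermitian, positive semidefinite and norm-$\le 1$ properties (the first theorem and \textit{Theorem \ref{thm2}}) therefore carry over to $\mathbf{S}^{\alpha}_{f,\bar g}$, $\mathbf{S}^{\alpha}_{\bar f,g}$ and $\mathbf{S}^{\alpha}_{\bar f,\bar g}$. The one exception is norm exactly $1$, which is not needed for Lemma~\ref{lem1}, since it only uses $\sigma_1<1$ and contractivity.

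\textbf{Step 3 (other three corners).} Apply \textit{Lemma \ref{lem1}} to the pair $(\mathbf{D}_f,\mathbf{B}^{\alpha}_{\bar g})$, using the joint operator $\mathbf{S}^{\alpha}_{f,\bar g}$. This gives the second line, read in the rotated coordinates in which the corresponding corner is the upper-right one. Repeat the same argument with $(\mathbf{D}_{\bar f},\mathbf{B}^{\alpha}_g)$ and with $(\mathbf{D}_{\bar f},\mathbf{B}^{\alpha}_{\bar g})$ to get the third and fourth lines. Rewriting via the identities of Step 2 puts each inequality in the notation of the statement. Since the four conditions are each implied for every unit $\bm{x}$, the pair $(\zeta_f(\bm{x}),\eta^{\alpha}_g(\bm{x}))$ lies in the displayed set. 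Existence of some such $\bm{x}$ is trivial, for example any eigenvector of $\mathbf{S}^{\alpha}_{f,g}$.

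\textbf{Main obstacle.} Lemma~\ref{lem1} is phrased with $\zeta_f=\|\mathbf{D}_f\bm{x}\|_2^2$ rather than the quadratic form $\bm{x}^{\mathrm H}\mathbf{D}_f\bm{x}$. These agree only in the projection case of \textit{Example \ref{example4}}, and this discrepancy is where I expect the real obstacle. To handle it I would work in the projection setting, where $\mathbf{D}_{\bar f}$ and $\mathbf{B}^{\alpha}_{\bar g}$ are again orthogonal projections and the statement is consistent. I would then verify in each corner that the hypothesis of the lemma, namely that the product of the two corner coordinates is at least $\sigma_1$, is exactly the stated condition. For soft filters I would first try to reprove the lemma with quadratic forms, using the angle between the vectors $\mathbf{D}_f^{1/2}\bm{x}$ and $(\mathbf{B}^{\alpha}_g)^{1/2}\bm{x}$.
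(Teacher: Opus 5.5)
Your strategy is exactly the paper's one-line ``by symmetry'' proof: apply \textit{Lemma \ref{lem1}} in the upper-right corner, then to $(\mathbf{D}_f,\mathbf{B}^{\alpha}_{\bar g})$, $(\mathbf{D}_{\bar f},\mathbf{B}^{\alpha}_{g})$ and $(\mathbf{D}_{\bar f},\mathbf{B}^{\alpha}_{\bar g})$, using $\mathbf{D}_{\bar f}=\mathbf{I}-\mathbf{D}_f$ and $\mathbf{B}^{\alpha}_{\bar g}=\mathbf{I}-\mathbf{B}^{\alpha}_g$. However, the step where rewriting ``puts each inequality in the notation of the statement'' does not go through. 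For $(\mathbf{D}_f,\mathbf{B}^{\alpha}_{\bar g})$ the corner coordinates are $(\zeta_f,\eta^{\alpha}_{\bar g})=(\zeta_f,1-\eta^{\alpha}_g)$. The lemma therefore gives $1-\eta^{\alpha}_g\le\gamma(\zeta_f)$ whenever $\zeta_f(1-\eta^{\alpha}_g)\ge\sigma_1(\mathbf{S}^{\alpha}_{f,\bar g})$, with $\gamma$ built from that $\sigma_1$. The printed line instead contains $1-\eta^{\alpha}_{\bar g}$, which by your own identity equals $\eta^{\alpha}_g$. Likewise $1-\zeta_{\bar f}=\zeta_f$ in the last two lines. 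As printed, lines two to four are just the upper-right constraint with the wrong thresholds, and they are false. Take $\alpha=0$, $\bm{f}=(1,0,0)$, $\hat{\bm{g}}=(1,0,1)$ and $\bm{x}=(\bm{e}_1+\bm{e}_3)/\sqrt{2}$. Then $\zeta_f=\tfrac12$, $\eta^{\alpha}_{\bar g}=0$ and $\sigma_1(\mathbf{S}^{\alpha}_{f,\bar g})=0$, and the printed second line would demand $1\le\gamma(\tfrac12)=\tfrac12$, whichever $\sigma_1$ enters $\gamma$. So you must state and prove the corrected version, rather than assert agreement with the printed one. That version has $\eta^{\alpha}_{\bar g}$ in place of $1-\eta^{\alpha}_{\bar g}$, $\zeta_{\bar f}$ in place of $1-\zeta_{\bar f}$, and each $\gamma$ built from its own corner's $\sigma_1$. (Your $\sigma_1=1$ case needs no reinterpretation: $\zeta\eta\ge 1$ forces $\zeta=\eta=1=\gamma(1)$.)

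Second, you are right that \textit{Lemma \ref{lem1}} is only justified for projections. In Appendix~\ref{appendixC} the final step $\|\mathbf{D}_f^{1/2}\bm{x}\|_2\|(\mathbf{B}^{\alpha}_g)^{1/2}\bm{x}\|_2\le\|\mathbf{D}_f\bm{x}\|_2\|\mathbf{B}^{\alpha}_g\bm{x}\|_2$ goes the wrong way for soft filters, and there $\|\mathbf{D}_{\bar f}\bm{x}\|_2^2\neq 1-\|\mathbf{D}_f\bm{x}\|_2^2$. Restricting to \textit{Example \ref{example4}} is sound but covers only that case, and your proposed repair fails. Take $\alpha=0$, $\bm{f}=(1,\epsilon)$, $\hat{\bm{g}}=(\epsilon,1)$ with $0<\epsilon<1$, and $\bm{x}=(1,1)/\sqrt{2}$. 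Then $\sigma_1=\epsilon$, but the cosine of the angle between $\mathbf{D}_f^{1/2}\bm{x}$ and $(\mathbf{B}^{\alpha}_g)^{1/2}\bm{x}$ is $2\sqrt{\epsilon}/(1+\epsilon)>\sqrt{\epsilon}$, so the spherical triangle inequality yields nothing.

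What does work for quadratic forms is a rotated-operator bound. For $c,s\ge 0$ write $c\mathbf{D}_f+s\mathbf{B}^{\alpha}_g=\mathbf{T}^{\mathrm{H}}\mathbf{T}$, where $\mathbf{T}$ is the vertical stack of $\sqrt{c}\,\mathbf{D}_f^{1/2}$ and $\sqrt{s}\,(\mathbf{B}^{\alpha}_g)^{1/2}$. Bound $\|\mathbf{T}\mathbf{T}^{\mathrm{H}}\|_2$ by the norm of its $2\times 2$ matrix of block norms, and use $\|\mathbf{D}_f^{1/2}(\mathbf{B}^{\alpha}_g)^{1/2}\|_2^2=\sigma_1$. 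This gives $\lambda_{\max}(c\mathbf{D}_f+s\mathbf{B}^{\alpha}_g)\le\tfrac12\big(c+s+\sqrt{(c-s)^2+4cs\sigma_1}\big)$. The right-hand side is precisely the support function, in first-quadrant directions, of the convex region $\{(\zeta,\eta)\in[0,1]^2 : \eta\le\gamma(\zeta)\ \text{whenever}\ \zeta\ge\sigma_1\}$. That region is the intersection of $[0,1]^2$ with these half-planes. Hence the conclusion of \textit{Lemma \ref{lem1}} holds for $\zeta_f=\bm{x}^{\mathrm{H}}\mathbf{D}_f\bm{x}$ and $\eta^{\alpha}_g=\bm{x}^{\mathrm{H}}\mathbf{B}^{\alpha}_g\bm{x}$ with any filters valued in $[0,1]$. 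The complement identities are then exact, and your four-corner argument goes through without the projection assumption.
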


\begin{proof}
	The proof follows directly from \textit{Lemma \ref{lem1}} by symmetry, applying the same argument to the complementary filters and the corresponding corner regions.
\end{proof}

For the four corner regions, when $\sigma_1 < 1$, \textit{Theorem \ref{thm3}} establishes an uncertainty relation between the operators $\mathbf{D}_f$ and $\mathbf{B}_g^{\alpha}$. It implies that a graph signal $\bm{x}$ cannot be simultaneously well localized with respect to both operators. In particular, the admissible pairs $(\zeta_f(\bm{x}), \eta_g^{\alpha}(\bm{x}))$ cannot approach the corner $(1,1)$. In the special case of \textit{Example \ref{example4}}, where $\mathbf{D}_f$ and $\mathbf{B}_g^{\alpha}$ are defined via energy concentration, the uncertainty characterization becomes sharp. The associated operators quantify localization in a set theoretic manner. Specifically, if $\bm{x}$ is supported on a vertex subset $\mathcal{S} \subseteq \mathcal{V}$, then $\zeta_f(\bm{x}) = 1$, whereas $\zeta_f(\bm{x}) = 0$ if $\bm{x}$ is supported on $\mathcal{S}^c$. Since the class of signals supported on $\mathcal{S}$ is large, this setting admits a broader family of signals that exhibit joint localization in both vertex and graph fractional spectral domains. Consequently, the feasible region $\Gamma(\mathbf{D}_f,\mathbf{B}_g^{\alpha})$ is generally larger than that of other operator pairs.

\begin{remark}
	In contrast to classical time-frequency analysis, where perfect joint localization is impossible, the graph fractional setting may admit the case $\sigma_1 = 1$. In this scenario, there exists a signal $\bm{x}$ such that $\mathbf{D}_f \bm{x} = \mathbf{B}_g^{\alpha} \bm{x} = \bm{x}$, or equivalently $\zeta_f(\bm{x}) = \eta_g^{\alpha}(\bm{x}) = 1$, indicating perfect localization in both domains. Therefore, a nontrivial graph fractional uncertainty principle holds only when $\sigma_1 < 1$, which excludes such degenerate cases.
\end{remark}

Since $\mathbf{S}^{\alpha}_{f,g}$ is Hermitian, its eigenvectors 
$\{\bm{\psi}_n\}_{n=1}^N$ form an orthonormal basis with corresponding eigenvalues $\{\sigma_n\}_{n=1}^N$. Any signal $\bm{x} \in \mathbb{C}^N$ can therefore be expanded as $\bm{x} = \sum_{n=1}^{N} (\bm{\psi}_n^{\mathrm{H}}\bm{x}) \bm{\psi}_n$. Let $\varkappa < \sigma_1$, then, the approximation of $\bm{x}$ obtained by retaining only the components corresponding to eigenvalues of set $\mathcal{I}_{\varkappa}
:= \{ n \in \{1,\dots,N\} \mid \sigma_n \ge \varkappa \}$ obeys the error bound
\begin{equation}
	\Big\|
	\bm{x} - \sum_{n \in \mathcal{I}_{\varkappa}}
	(\bm{\psi}_n^{\mathrm{H}}\bm{x}) \bm{\psi}_n
	\Big\|_2^2
	\le
	\frac{\sigma_1- \varkappa^{\alpha}_{f,g}(\bm{x})}
	{\sigma_1 - \varkappa}
	\|\bm{x}\|_2^2,
\end{equation}
where $\varkappa^{\alpha}_{f,g}(\bm{x})$ denotes the mean localization level defined in Eq.~\eqref{varkappa}. Furthermore, for any $a > 0$, define the set $\mathcal{I}_{\varkappa,a}
:=
\{
n \in \{1,\dots,N\}
\;|\;
\sigma_n \in [\varkappa^{\alpha}_{f,g}(\bm{x}) - a,\;
\varkappa^{\alpha}_{f,g}(\bm{x}) + a]
\}$.
Then, the approximation satisfies
\begin{equation}
	\Big\|
	\bm{x} - \sum_{n \in \mathcal{I}_{\varkappa,a}}
	(\bm{\psi}_n^{\mathrm{H}}\bm{x}) \bm{\psi}_n
	\Big\|_2^2
	\le
	\frac{\mathrm{var}[\mathbf{S}^{\alpha}_{f,g}](\bm{x})}{a^2}
	\|\bm{x}\|_2^2,
\end{equation}
where the variance is defined as
\[
\mathrm{var}[\mathbf{S}^{\alpha}_{f,g}](\bm{x})
=
\bm{x}^{\mathrm{H}}
\left(\mathbf{S}^{\alpha}_{f,g}
-
\varkappa^{\alpha}_{f,g}(\bm{x})\mathbf{I}_N\right)^2
\bm{x}.
\]

This observation establishes a direct connection between the uncertainty region $\Gamma(\mathbf{D}_f,\mathbf{B}_g^{\alpha})$ and signal representation, as signals associated with boundary points of $\Gamma(\mathbf{D}_f,\mathbf{B}_g^{\alpha})$ exhibit strong joint localization and thus admit efficient approximations using only a few eigenvectors of $\mathbf{S}^{\alpha}_{f,g}$.

\subsection{Uncertainty Principles via Rotated Operators}

To further characterize the admissible region $\Gamma(\mathbf{D}_f,\mathbf{B}_g^{\alpha})$, we adopt a geometric perspective based on the numerical range. This viewpoint provides a unified framework to describe the trade-off between vertex domain and graph fractional spectral domain localization.

For a normalized graph signal $\bm{x} \in \mathbb{C}^N$ with $\|\bm{x}\|_2 = 1$, recall the localization measures in Eq.~\eqref{zeta} and \eqref{eta}. The admissible pair $(\zeta_f(\bm{x}), \eta_g^{\alpha}(\bm{x}))$ can be identified with a point in $\mathbb{R}^2$, or equivalently with the complex scalar
\[
\zeta_f(\bm{x}) + \mathrm{i}\,\eta_g^{\alpha}(\bm{x})
=
\bm{x}^{\mathrm{H}} \left( \mathbf{D}_f + \mathrm{i}\mathbf{B}_g^{\alpha} \right) \bm{x},
\]
where $\mathrm{i}$ denotes the imaginary unit.

This observation establishes a direct connection between the admissible region $\Gamma(\mathbf{D}_f,\mathbf{B}_g^{\alpha})$ and the numerical range of the operator
\[
\mathbf{A}^{\alpha}_{f,g} := \mathbf{D}_f + \mathrm{i}\mathbf{B}_g^{\alpha}.
\]

\begin{defn}
	The numerical range of $\mathbf{A}^{\alpha}_{f,g}$ is defined as
	\[
	\mathcal{C}(\mathbf{A}^{\alpha}_{f,g})
	:=
	\left\{
	\bm{x}^{\mathrm{H}} \mathbf{A}^{\alpha}_{f,g} \bm{x}
	\;\middle|\;
	\bm{x} \in \mathbb{C}^N,\ \|\bm{x}\|_2 = 1
	\right\}.
	\]
\end{defn}
By identifying $\mathbb{C}$ with $\mathbb{R}^2$, the admissible region $\Gamma(\mathbf{D}_f,\mathbf{B}_g^{\alpha})$ corresponds to the real-imaginary projection of $\mathcal{C}(\mathbf{A}^{\alpha}_{f,g})$. A fundamental property of the numerical range is convexity, as guaranteed by the Hausdorff-Toeplitz theorem \cite{Hausdorff}. Consequently, $\Gamma(\mathbf{D}_f,\mathbf{B}_g^{\alpha})$ is a convex and compact subset of $[0,1]^2$.

To characterize its boundary, we introduce a family of rotated joint operators, defined in Eq.~\eqref{R}, and since $\mathbf{R}^{\alpha}_{f,g}(\beta)$ is Hermitian, it admits a complete set of orthonormal eigenvectors. Let $\rho_1(\beta)$ denote its largest eigenvalue and $\bm{\phi}_1(\beta)$ a corresponding eigenvector. Here, we define the supporting line $\ell(\beta) :=
\big\{
(\zeta_f(\bm{x}),\eta_g^{\alpha}(\bm{x})) \in \mathbb{R}^2 \;\big|\;
\cos\beta\,\zeta_f(\bm{x}) + \sin\beta\,\eta_g^{\alpha}(\bm{x}) = \rho_1(\beta)\big\}$. The following result provides a geometric uncertainty principle in the GFRFT domain.

\begin{thm}\label{thm4}
	For any $\beta \in [0,2\pi)$, the admissible region satisfies
	\begin{equation}
		\begin{aligned}\Gamma (\mathbf{D}_{f},\mathbf{B}^{\alpha }_{g} )\subseteq &\big\{ (\zeta_{f}(\bm{x}),\eta^{\alpha }_{g}(\bm{x}) )\in [0,1]^{2}\\ & \;\;\big|\; \cos \beta \, \zeta_{f}(\bm{x}) +\sin \beta \, \eta^{\alpha }_{g}(\bm{x}) \leq \rho_{1} (\beta )\big\}, \end{aligned} 
	\end{equation}
	where the boundary is supported by the line $\ell(\beta)$.
\end{thm}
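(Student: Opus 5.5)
The inclusion follows from the Rayleigh quotient bound for the Hermitian operator $\mathbf{R}^{\alpha}_{f,g}(\beta)$ in Eq.~\eqref{R}. Tangency of $\ell(\beta)$ then follows by evaluating at a top eigenvector.

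\emph{Step 1 (rewrite the pair).} Fix $\beta\in[0,2\pi)$ and any $\bm{x}$ with $\|\bm{x}\|_2=1$. Linearity of the quadratic form gives Eq.~\eqref{kappa}:
\[
\cos\beta\,\zeta_f(\bm{x})+\sin\beta\,\eta^{\alpha}_g(\bm{x})
=\bm{x}^{\mathrm{H}}\mathbf{R}^{\alpha}_{f,g}(\beta)\bm{x}.
\]
This uses only the Hermitian property of $\mathbf{D}_f$ and $\mathbf{B}^{\alpha}_g$ (first Theorem of Section~\ref{sec3}). Hence the left side is real, and $\mathbf{R}^{\alpha}_{f,g}(\beta)$ is Hermitian for \emph{every} $\beta$, not only for $\beta\in[0,\frac{\pi}{2}]$. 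No positive semidefiniteness is needed here, which matters because $\cos\beta$ or $\sin\beta$ may be negative.

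\emph{Step 2 (Rayleigh bound).} Write $\mathbf{R}^{\alpha}_{f,g}(\beta)=\sum_i\rho_i(\beta)\bm{\phi}_i\bm{\phi}_i^{\mathrm{H}}$ with an orthonormal eigenbasis. Then
\[
\bm{x}^{\mathrm{H}}\mathbf{R}^{\alpha}_{f,g}(\beta)\bm{x}
=\sum_i\rho_i(\beta)|\bm{\phi}_i^{\mathrm{H}}\bm{x}|^2\le\rho_1(\beta)\sum_i|\bm{\phi}_i^{\mathrm{H}}\bm{x}|^2=\rho_1(\beta).
\]
Combined with $\Gamma\subseteq[0,1]^2$, this gives the claimed inclusion. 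The bound $\Gamma\subseteq[0,1]^2$ holds because $0\le\zeta_f,\eta^{\alpha}_g\le 1$, by positive semidefiniteness and unit spectral norm.

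\emph{Step 3 (supporting line).} Take $\bm{x}=\bm{\phi}_1(\beta)$. Equality then holds in Step 2, so the point $(\zeta_f(\bm{\phi}_1),\eta^{\alpha}_g(\bm{\phi}_1))$ lies in $\Gamma$ and on $\ell(\beta)$. Together with Step 2, $\Gamma$ touches $\ell(\beta)$ and lies entirely in one closed half-plane, so $\ell(\beta)$ is a supporting line. Equality in Step 2 holds exactly when $\bm{x}$ lies in the top eigenspace. So $\Gamma\cap\ell(\beta)$ is the image of that eigenspace, which is a point or a segment.

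As a remark, taking the intersection over all $\beta$ recovers $\Gamma$ itself. This uses the convexity and compactness of $\Gamma$ given by the Hausdorff--Toeplitz theorem, together with the separating hyperplane theorem. It is not required for the stated inclusion.

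The only real subtlety is to handle $\beta$ outside $[0,\frac{\pi}{2}]$ correctly by relying on Hermitian-ness alone. The rest is the standard Rayleigh quotient argument.
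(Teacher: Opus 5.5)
Your proposal is correct and follows essentially the same route as the paper's proof: rewrite $\cos\beta\,\zeta_f(\bm{x})+\sin\beta\,\eta^{\alpha}_g(\bm{x})$ as the Rayleigh quotient of the Hermitian operator $\mathbf{R}^{\alpha}_{f,g}(\beta)$, bound it by $\rho_1(\beta)$, and get the supporting line $\ell(\beta)$ from equality at $\bm{\phi}_1(\beta)$. Your additions are sound refinements of that argument, not a different approach: the explicit eigen-expansion, the justification of $\Gamma\subseteq[0,1]^2$ via positive semidefiniteness and unit norms, the note that only Hermitian-ness is needed for all $\beta$, and the description of $\Gamma\cap\ell(\beta)$ as the image of the top eigenspace.
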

\begin{proof}
	The proof is provided in Appendix~\ref{appendixD}.
\end{proof}
The inequality becomes tight when $\bm{x}$ is chosen as an eigenvector corresponding to $\rho_1(\beta)$. Moreover, for every boundary point $(\zeta_f(\bm{x}),\eta_g^{\alpha}(\bm{x}))$ of $\Gamma(\mathbf{D}_f,\mathbf{B}_g^{\alpha})$, there exists an angle $\beta$ such that
\[
\cos\beta\, \zeta_f(\bm{x}) + \sin\beta\, \eta_g^{\alpha}(\bm{x})= \rho_1(\beta),
\]
and this point is attained by an eigenvector of $\mathbf{R}^{\alpha}_{f,g}$, yielding
\[
(\zeta_f(\bm{x}),\eta_g^{\alpha}(\bm{x}))
=
\left(
\bm{\phi}^{\mathrm{H}}_1(\beta) \mathbf{D}_f \bm{\phi}_1(\beta),
\;
\bm{\phi}^{\mathrm{H}}_1(\beta) \mathbf{B}_g^{\alpha} \bm{\phi}_1(\beta)
\right).
\]

The operator $\mathbf{R}^{\alpha}_{f,g}$ induces a family of linear trade-offs between vertex and graph fractional spectral localization. The maximal eigenvalue $\rho_1(\beta)$ defines the supporting line $\ell(\beta)$ which has slope $-\cot\beta$ in the $(\zeta_f(\bm{x}),\eta_g^{\alpha}(\bm{x}))$ plane. The admissible region is obtained as the intersection of all such supporting half-spaces.

\begin{remark}
	When $\beta = 0$ and $\beta = \frac{\pi}{2}$, the operator reduces to $\mathbf{D}_f$ and $\mathbf{B}_g^{\alpha}$, respectively. In the balanced case $\beta = \frac{\pi}{4}$, one obtains the symmetric bound
	\[
	\zeta_f(\bm{x}) + \eta_g^{\alpha}(\bm{x})
	\le \sqrt{2}\,\rho_1\!\left(\tfrac{\pi}{4}\right),
	\]
	where $\rho_1\!\left(\tfrac{\pi}{4}\right)$ is the largest eigenvalue of
	$\frac{1}{\sqrt{2}}(\mathbf{D}_f + \mathbf{B}_g^{\alpha})$. This yields a balanced uncertainty relation in the GFRFT domain.
\end{remark}

For any fixed $\beta \in [0,2\pi)$, the rotated operator $\mathbf{R}^{\alpha}_{f,g}$ is Hermitian and admits an orthonormal eigenbasis $\{\bm{\phi}_n(\beta)\}_{n=1}^N$ with eigenvalues $\{\rho_n(\beta)\}_{n=1}^N$. Accordingly, any signal $\bm{x}$ can be expanded as $\bm{x} = \sum_{n=1}^{N} (\bm{\phi}_n^{\mathrm{H}}(\beta)\bm{x}) \bm{\phi}_n(\beta)$. Let $\kappa < \rho_1(\beta)$. Then, truncating the expansion to the components corresponding to set $\mathcal{I}_{\kappa}
:= \{ n \in \{1,\dots,N\} \mid \rho_n(\beta) \ge \kappa \}$ yields the approximation error bound
\begin{equation}
	\Big\|
	\bm{x} - \sum_{n\in \mathcal{I}_{\kappa}}
	(\bm{\phi}_n^{\mathrm{H}}(\beta)\bm{x}) \bm{\phi}_n(\beta)
	\Big\|_2^2
	\le
	\frac{\rho_1(\beta) - \kappa^{\alpha}_{f,g}(\bm{x})}
	{\rho_1(\beta) - \kappa}
	\|\bm{x}\|_2^2,
\end{equation}
where $\kappa^{\alpha}_{f,g}(\bm{x})$ is defined in Eq.~\eqref{kappa}. Moreover, for any $a > 0$, define the set $\mathcal{I}_{\kappa,a}
:=
\{ n \in \{1,\dots,N\}\;|\; \rho_n(\beta) \in [\kappa^{\alpha}_{f,g}(\bm{x}) - a,\;
\kappa^{\alpha}_{f,g}(\bm{x}) + a]
\}$. Then, restricting the expansion to eigenvalues $\rho_n(\beta)$ gives
\begin{equation}
	\Big\|
	\bm{x} - \sum_{n \in \mathcal{I}_{\kappa,a}}
	(\bm{\phi}_n^{\mathrm{H}}(\beta)\bm{x}) \bm{\phi}_n(\beta)
	\Big\|_2^2
	\le 	
	\frac{\mathrm{var}[\mathbf{R}^{\alpha}_{f,g}](\bm{x})}{a^2}	\|\bm{x}\|_2^2,
\end{equation}
where
\[
\mathrm{var}[\mathbf{R}^{\alpha}_{f,g}](\bm{x})
=
\bm{x}^{\mathrm{H}}
\left(\mathbf{R}^{\alpha}_{f,g}
-
\kappa^{\alpha}_{f,g}(\bm{x})\mathbf{I}_N\right)^2
\bm{x}.
\]

The dominant eigenvector $\bm{\phi}_1(\beta)$ achieves the maximal projection onto $\ell(\beta)$ and represents the most localized signal along direction $\beta$. 
If $\bm{x}$ aligns with a given trade-off direction, its energy concentrates on a few leading eigenvectors of $\mathbf{R}^{\alpha}_{f,g}$. 
Thus, by selecting $\beta$, the approximation adapts to the geometry of the uncertainty region, with the error governed by spectral concentration. 
The rotated operator family therefore not only characterizes the boundary of $\Gamma(\mathbf{D}_f,\mathbf{B}_g^{\alpha})$, but also enables geometry-adaptive representations.

%主导特征向量 $\bm{\phi}_1(\beta)$ 对应于在支撑线 $\ell(\beta)$ 上的最大投影，因此代表了沿 $\beta$ 指定方向的最局部化信号。因此，如果信号 $\bm{x}$ 与某个特定的权衡方向对齐，其能量将集中在 $\mathbf{R}^{\alpha}_{f,g}(\beta)$ 的几个主要特征向量上。因此，通过选择合适的旋转参数 $\beta$，可以使 $\bm{x}$ 的近似值适应不确定区域的几何形状。在这种情况下，近似误差取决于信号在 $\mathbf{R}^{\alpha}_{f,g}(\beta)$ 的谱中的集中程度，更强的方向局部化会导致更有效的表示。这种观点强调，旋转算子族不仅刻画了 $\Gamma(\mathbf{D}_f,\mathbf{B}_g^{\alpha})$ 的边界，而且还提供了一种机制来构建适应不同定位权衡的信号表示。

%The leading eigenvector $\bm{\phi}_1(\beta)$ corresponds to the maximal projection onto the supporting line $\ell(\beta)$ and therefore represents the most localized signal along the direction specified by $\beta$. 
%As a result, if a signal $\bm{x}$ is aligned with a particular trade-off direction, its energy becomes concentrated in a few dominant eigenvectors of $\mathbf{R}^{\alpha}_{f,g}(\beta)$.
%
%Consequently, the approximation of $\bm{x}$ can be adapted to the geometry of the uncertainty region by selecting an appropriate rotation parameter $\beta$. 
%In this case, the approximation error is governed by how concentrated the signal is in the spectrum of $\mathbf{R}^{\alpha}_{f,g}(\beta)$, with stronger directional localization leading to more efficient representations.
%
%This perspective highlights that the family of rotated operators not only characterizes the boundary of $\Gamma(\mathbf{D}_f,\mathbf{B}_g^{\alpha})$, but also provides a mechanism to construct signal representations adapted to different localization trade-offs.

\subsection{Numerical Approximation of the Admissible Region}

The geometric characterization in \textit{Theorem \ref{thm4}} provides a complete description of the admissible region $\Gamma(\mathbf{D}_f,\mathbf{B}_g^{\alpha})$ as the intersection of supporting half-spaces induced by the family of operators $\mathbf{R}^{\alpha}_{f,g}$. In practice, however, an explicit representation of this region is not directly available, and numerical approximation becomes necessary.

To this end, we consider a finite set of angles 
$\boldsymbol{\beta}=\{\beta_1,\dots,\beta_M\} \subset [0,2\pi)$ with $M \ge 3$. 
For each $\beta_m$, let $\rho_1(\beta_m)$ denote the largest eigenvalue of $\mathbf{R}^{\alpha}_{f,g}(\beta_m)$, and let $\bm{\phi}_1(\beta_m)$ be a corresponding eigenvector. We define the boundary point
\[
p(\beta_m)
:=
\Big(
\bm{\phi}_1^{\mathrm{H}}(\beta_m)\mathbf{D}_f\bm{\phi}_1(\beta_m),
\;
\bm{\phi}_1^{\mathrm{H}}(\beta_m)\mathbf{B}_g^{\alpha}\bm{\phi}_1(\beta_m)
\Big),
\]
which lies on the supporting line $\ell(\beta_m)$. Using these quantities, we construct two approximations of the admissible region.

%First, an outer approximation is obtained as the intersection of the half-spaces induced by the supporting lines,
%\[
%\begin{aligned}\Gamma_{\mathrm{out}}(\bm{\beta}) :=\bigcap^{M}_{m=1} 
%&\big\{ (\zeta_{f}(\bm{x}) ,\eta^{\alpha }_{g}(\bm{x}) )\in [0,1]^{2}\\ 
%&\;\;\big|\; \cos \beta_{m} \, \zeta_{f}(\bm{x}) +\sin \beta_{m} \, \eta^{\alpha }_{g}(\bm{x}) \leq \rho_{1} (\beta_{m} )\big\}. \end{aligned}
%\]
%
%Second, an inner approximation is constructed as the convex hull of the boundary points,
%\[
%\Gamma_{\mathrm{in}}(\bm{\beta})
%:=
%\operatorname{conv}
%\big\{
%p(\beta_1),\dots,p(\beta_M)
%\big\}.
%\]
%
%Due to the convexity of $\Gamma(\mathbf{D}_f,\mathbf{B}_g^{\alpha})$, it follows directly that
%\[
%\Gamma_{\mathrm{in}}(\bm{\beta})
%\subseteq
%\Gamma(\mathbf{D}_f,\mathbf{B}_g^{\alpha})
%\subseteq
%\Gamma_{\mathrm{out}}(\bm{\beta}).
%\]
First, an outer approximation is obtained as the intersection of the half-spaces induced by the supporting lines,
\[
\begin{aligned}
	\Gamma_{\mathrm{out}}(\bm{\beta}) 
	:=\bigcap^{M}_{m=1} 
	&\big\{ (\zeta_{f}(\bm{x}) ,\eta^{\alpha }_{g}(\bm{x}) )\in [0,1]^{2}\\ 
	&\;\;\big|\; \cos \beta_{m} \, \zeta_{f}(\bm{x}) +\sin \beta_{m} \, \eta^{\alpha }_{g}(\bm{x}) \leq \rho_{1} (\beta_{m} )\big\}. 
\end{aligned}
\]

Second, an inner approximation is constructed as the convex hull of the boundary points, obtained as the smallest convex set containing all points,
\[
\Gamma_{\mathrm{in}}(\bm{\beta})
:=
\operatorname{conv}
\big\{
p(\beta_1),\dots,p(\beta_M)
\big\}.
\]

Due to the convexity of $\Gamma(\mathbf{D}_f,\mathbf{B}_g^{\alpha})$, it follows directly that
\[
\Gamma_{\mathrm{in}}(\bm{\beta})
\subseteq
\Gamma(\mathbf{D}_f,\mathbf{B}_g^{\alpha})
\subseteq
\Gamma_{\mathrm{out}}(\bm{\beta}).
\]
This establishes a sandwich-type approximation of the admissible region. As the number of angles $M$ increases and the sampling of $\beta$ becomes denser, the inner and outer approximations converge to the true admissible region.

Finally, we note that this polygonal approximation framework \cite{GshapesUC} naturally extends the uncertainty principle in the GFRFT domain from a theoretical characterization to a numerically tractable form, enabling visualization and quantitative analysis of the trade-off between vertex and graph fractional spectral localization. We summarize this numerical approximately in Algorithm \ref{alg1}.

\begin{algorithm}[h!]
	\small
	\caption{Numerical approximation of $\Gamma(\mathbf{D}_f,\mathbf{B}_g^{\alpha})$}
	\label{alg1}
	\begin{algorithmic}
		
		\State \textbf{Input:} $\mathbf{D}_f$, $\mathbf{B}_g^{\alpha}$, angles $\{\beta_m\}_{m=1}^M \subset [0,2\pi)$
		
		\For{$m = 1,\dots,M$}
		\State Construct
		\[
		\mathbf{R}^{\alpha}_{f,g}(\beta_m)
		=
		\cos\beta_m\,\mathbf{D}_f
		+
		\sin\beta_m\,\mathbf{B}_g^{\alpha}
		\]
		\State Compute largest eigenvalue $\rho_1(\beta_m)$ and eigenvector $\bm{\phi}_1(\beta_m)$
		\State Compute boundary point
		\[
		p(\beta_m)
		=
		\Big(
		\bm{\phi}_1^{\mathrm{H}}(\beta_m)\mathbf{D}_f\bm{\phi}_1(\beta_m),
		\;
		\bm{\phi}_1^{\mathrm{H}}(\beta_m)\mathbf{B}_g^{\alpha}\bm{\phi}_1(\beta_m)
		\Big)
		\]
		\EndFor
		
		\State Construct inner approximation
		\[
		\Gamma_{\mathrm{in}}(\boldsymbol{\beta})
		=
		\operatorname{conv}
		\{p(\beta_1), \dots, p(\beta_M)\}
		\]
		
		\For{$m = 1,\dots,M$}
		\State Compute $q(\beta_m)$ as the intersection of $\ell(\beta_{m-1})$ and $\ell(\beta_m)$
		\EndFor
		
		\State Construct outer approximation
		\[
		\Gamma_{\mathrm{out}}(\boldsymbol{\beta})
		=
		\operatorname{conv}
		\{q(\beta_1), \dots, q(\beta_M)\}
		\]
		
		\State \textbf{Output:} $\Gamma_{\mathrm{in}}(\boldsymbol{\beta})$, $\Gamma_{\mathrm{out}}(\boldsymbol{\beta})$
	\end{algorithmic}
\end{algorithm}

\section{Examples and Illustrations}
\label{sec5}

This section provides numerical examples to illustrate the proposed graph fractional uncertainty principle. We examine how the fractional order $\alpha$ changes graph basis functions and the corresponding admissible uncertainty regions. Then, we study how different localization filters $\bm{f}$ and $\bm{g}$ affect the uncertainty geometry on a fixed graph. Furthermore, we analyze the spectra of the joint localization operators to give a quantitative explanation of the observed uncertainty regions.

\subsection{Effect of the Fractional Order on Graph Uncertainty Regions}

We first investigate the role of the fractional order $\alpha$. Two representative graphs are considered \cite{GSPBox}. The first graph is an Erd\H{o}s--R\'enyi graph with $N=256$ vertices and connection threshold $1/6$, and it is used under \textit{Example~\ref{example1}}. The second graph is a guppy graph\footnote{[Online]. Available: \url{https://github.com/WolfgangErb/GUPPY}.} with $N=896$ vertices, and its geometry is more irregular, used for \textit{Example~\ref{example2}}. For graph signal visualization, the fractional order is fixed as $\alpha=0.8$. The displayed GFT-domain signal is generated as a linear combination of the first three GFT basis vectors with weights $1$, $0.5$, and $2$, respectively. Similarly, the displayed GFRFT-domain signal is generated from the first three GFRFT basis vectors using the same weights, and its real part is used for visualization.

Fig. \ref{fig01} compares representative GFT and GFRFT basis signals. Panels (a) and (b) show the GFT-domain and GFRFT-domain basis signals on the Erd\H{o}s--R\'enyi graph, respectively. Panels (c) and (d) show the same comparison on the guppy graph. The GFRFT basis changes the spatial distribution of graph spectral atoms, indicating that the fractional order provides an additional degree of freedom for adjusting the localization pattern of graph signals.

\begin{figure}[h!]
	\centering
	\includegraphics[width=0.55\linewidth]{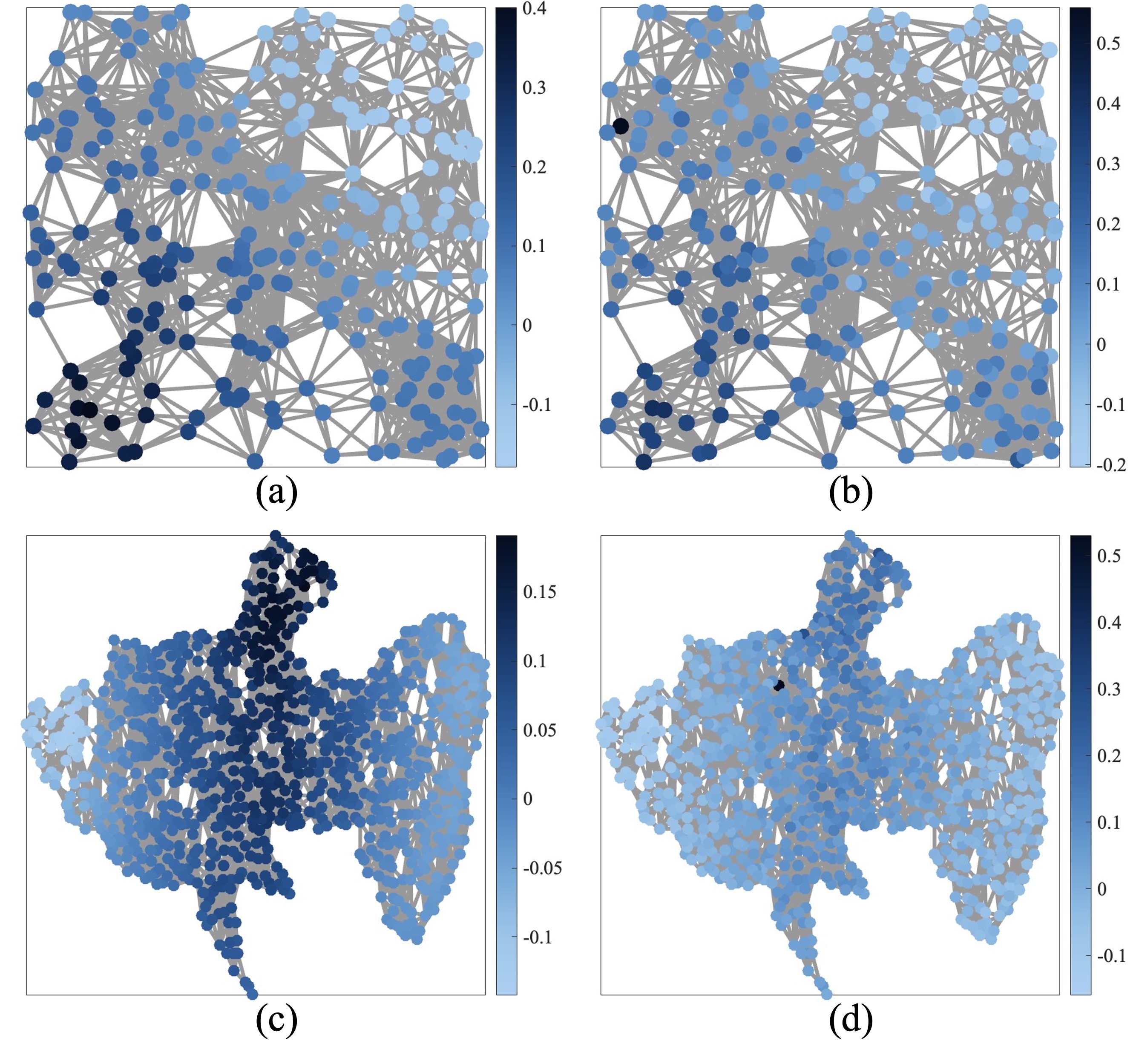}
	\vspace*{-8pt}
	\caption{Representative GFT and GFRFT signals: (a) Erdős–Rényi GFT, (b) Erdős–Rényi GFRFT, (c) guppy GFT, (d) guppy GFRFT.}
	\label{fig01}
\end{figure}

We next compute the admissible uncertainty regions for different fractional orders. For each order $\alpha$, the vertex-domain and fractional spectral localization operators are constructed from the prescribed filters. The boundary of each uncertainty region is obtained using a polygonal numerical-range approximation with 240 sampling directions and a rotation parameter of $\pi/4$. For the Erd\H{o}s--R\'enyi graph under \textit{Example~\ref{example1}}, the reference vertex is chosen as the middle-index vertex. The vertex-domain filter adopts a distance-decay exponent $a=1$, whereas the fractional spectral filter uses a soft-bandlimited smoothing exponent $b=2$. The fractional spectral subset $\mathcal{F}$ consists of the first 47 low-frequency components. For the guppy graph under \textit{Example~\ref{example2}}, both localization filters are generated from the corresponding fractional Laplacian eigenvalue structure, without introducing an additional projection subset. The case $\alpha=1$ recovers the conventional GFT-based setting and is included as a reference.

Figs.~\ref{er} and \ref{guppy} show the uncertainty regions on the Erd\H{o}s--R\'enyi graph under \textit{Example~\ref{example1}} and on the guppy graph under \textit{Example~\ref{example2}}, respectively. In both figures, the first seven panels present the uncertainty regions for individual fractional orders, while the last panel overlays all boundaries. For $\alpha=1$, the corresponding GFT-based uncertainty region is also plotted as a reference. The close agreement between the GFRFT-based and GFT-based regions at $\alpha=1$ confirms that the proposed formulation recovers the classical GFT-domain case. Compared with this GFT-based reference, varying the fractional order reduces the admissible uncertainty region in Fig.~\ref{er}, whereas it enlarges the region in Fig.~\ref{guppy}. This graph-dependent behavior indicates that the fractional order can reshape the uncertainty region beyond the classical case, providing a tunable mechanism for controlling the vertex-spectral localization trade-off.
\begin{figure*}[h!]
	\centering
	\includegraphics[width=\linewidth]{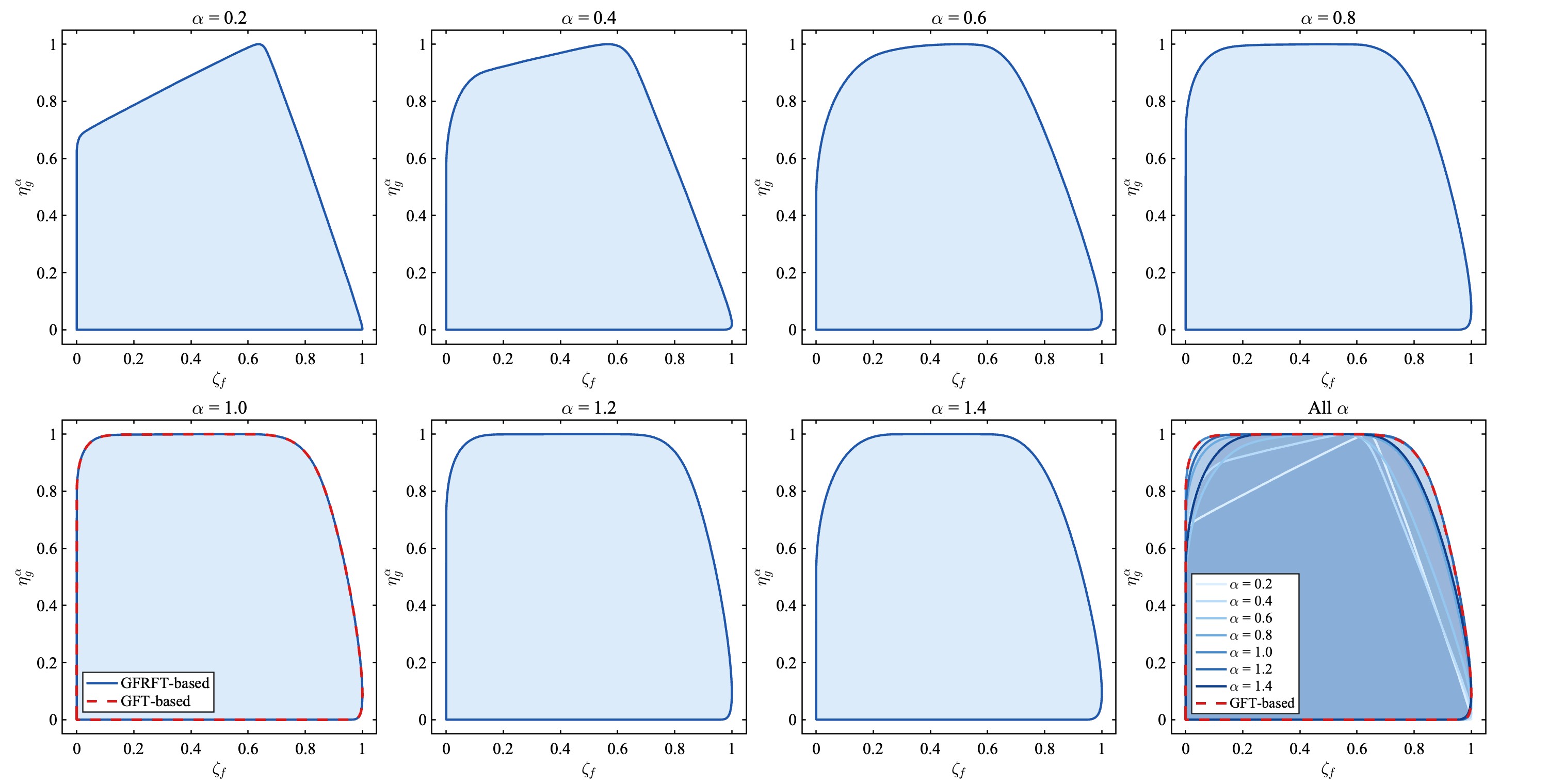}
	\vspace*{-15pt}
	\caption{Effect of the fractional order $\alpha$ on the uncertainty regions for the Erd\H{o}s--R\'enyi graph under \textit{Example~\ref{example1}}. The first seven panels correspond to $\alpha=0.2,0.4,0.6,0.8,1.0,1.2,1.4$, respectively, while the last panel overlays all GFRFT-based boundaries.}
	\label{er}
\end{figure*}
\begin{figure*}[h!]
	\centering
	\includegraphics[width=\linewidth]{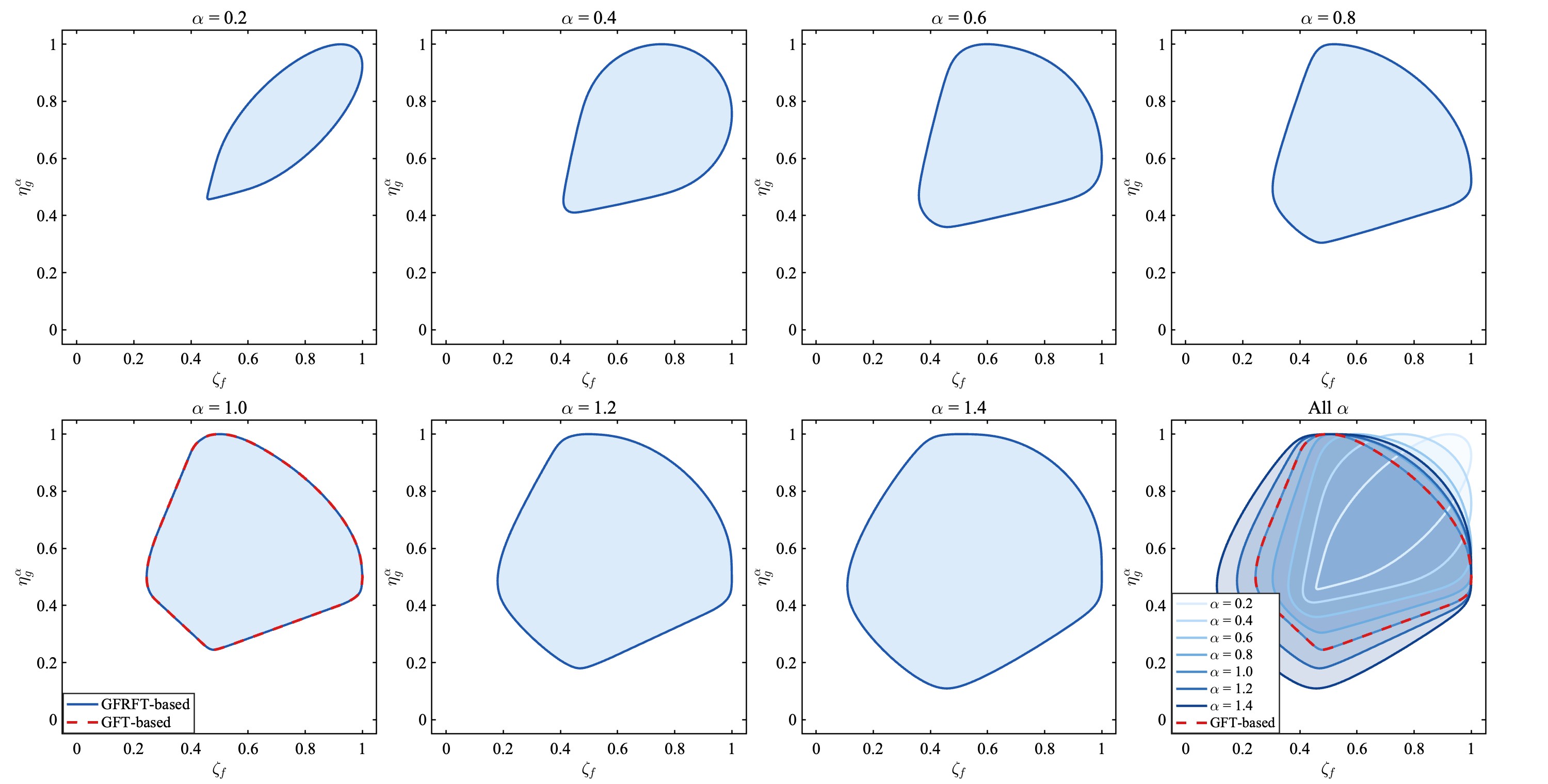}
	\vspace*{-15pt}
	\caption{Effect of the fractional order $\alpha$ on the uncertainty regions for the guppy graph under \textit{Example~\ref{example2}}. The first seven panels correspond to $\alpha=0.2,0.4,0.6,0.8,1.0,1.2,1.4$, respectively, while the last panel overlays all GFRFT-based boundaries.}
	\label{guppy}
\end{figure*}

\subsection{Influence of Localization Filters in the GFRFT Domain}

We then investigate the effect of graph fractional localization filters on the bunny graph\footnote{[Online]. Available: \url{https://graphics.stanford.edu/data/3Dscanrep/}.} with $N=900$, and the fractional order is fixed at $\alpha=0.7$. Four representative filter pairs, denoted by \textit{Examples~\ref{example1}--\ref{example4}}, are considered. Each example defines a vertex-domain filter $\bm{f}$ and a fractional spectral filter $\bm{g}$. The vertex filter is centered at the geometric center of the graph. For vertex localization based on distance or projection, the radius is set to $0.015$ to obtain a nondegenerate local vertex set. The distance-decay exponent for vertex localization is set to 1, and the soft-bandlimited smoothing exponent for fractional spectral localization is set to 2. When a bandlimited spectral construction is used, the fractional spectral subset includes the first $162$ low-frequency components.

Fig. \ref{filterbunny} illustrates the vertex-domain filters $\bm{f}$ for the four examples. The panels reveal how different examples define distinct notions of vertex localization: some filters are tightly concentrated around the center, whereas others exhibit smoother or broader support. Fig. \ref{filterhatg} shows the corresponding fractional spectral representations $|\hat{\bm{g}}|=|\mathbf{F}^{\alpha} \bm{g}|$. Together, these figures clarify the source of differing uncertainty regions: each example combines a specific vertex-domain and fractional spectral filter.
\begin{figure*}[h!]
	\centering
	\includegraphics[width=\linewidth]{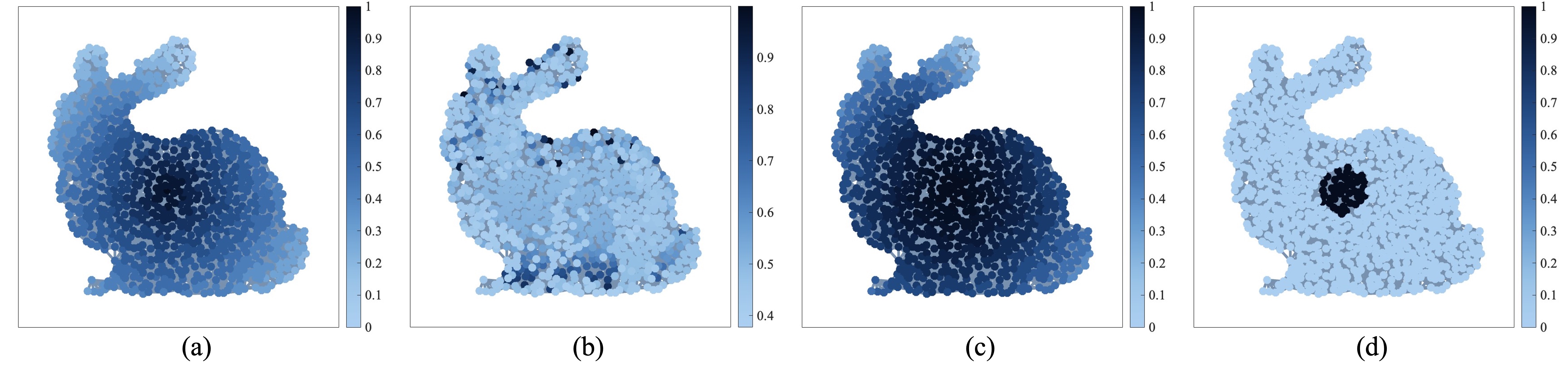}
	\vspace*{-20pt}
	\caption{Vertex-domain filter signals $\bm{f}$ for four localization examples on the bunny graph. Panels (a)--(d) correspond to \textit{Examples \ref{example1}--\ref{example4}}, respectively.}
	\label{filterbunny}
\end{figure*}

\begin{figure*}[h!]
	\centering
	\includegraphics[width=\linewidth]{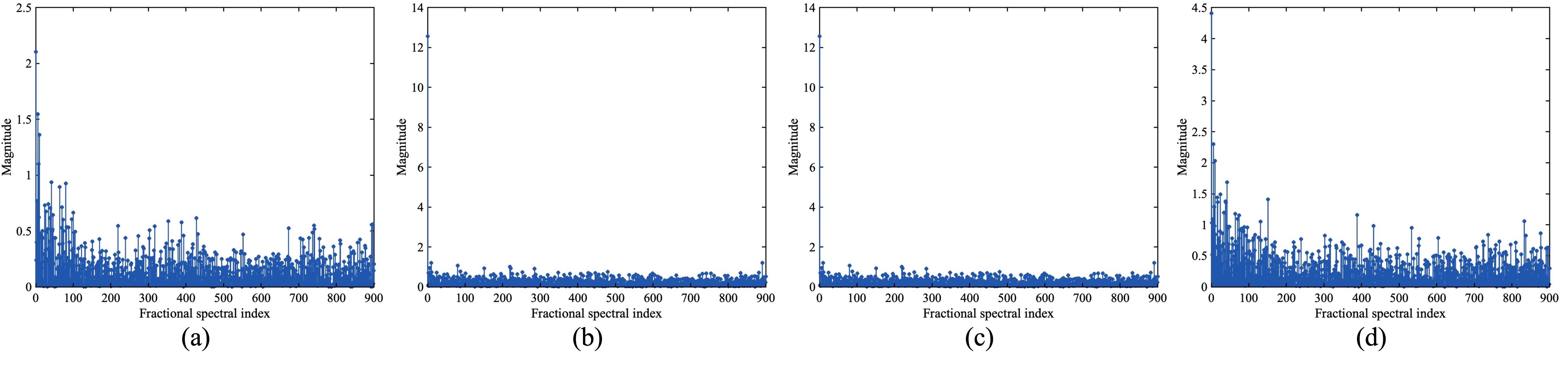}
	\vspace*{-20pt}
	\caption{GFRFT-based spectral representations $|\hat{\bm{g}}|$ of the four spectral filters on the bunny graph. Panels (a)--(d) correspond to \textit{Examples \ref{example1}--\ref{example4}}, respectively.}
	\label{filterhatg}
\end{figure*}

The corresponding uncertainty regions are shown in Fig. \ref{filters}. 
The internal dots represent the localization pairs of the leading vertex-fractional spectral atoms, namely, $(\zeta_f(\bm{\psi}_k),\eta_g^{\alpha}(\bm{\psi}_k))$, where $\bm{\psi}_k$ is a leading eigenvector of the operator $\mathbf{S}_{f,g}^{\alpha}$. The ringed dot corresponds to the largest eigenvalue and therefore marks the dominant joint-localized atom. These points lie inside the admissible region and show how the leading atoms are distributed relative to the uncertainty boundary. Fig. \ref{filtersall} overlays the four uncertainty regions for direct comparison, showing that their distinct shapes arise not only from the graph topology but also from the chosen vertex-domain and fractional spectral filters, which confirms the flexibility of the proposed operator-based framework for different localization definitions.
\begin{figure*}[h!]
	\centering
	\includegraphics[width=\linewidth]{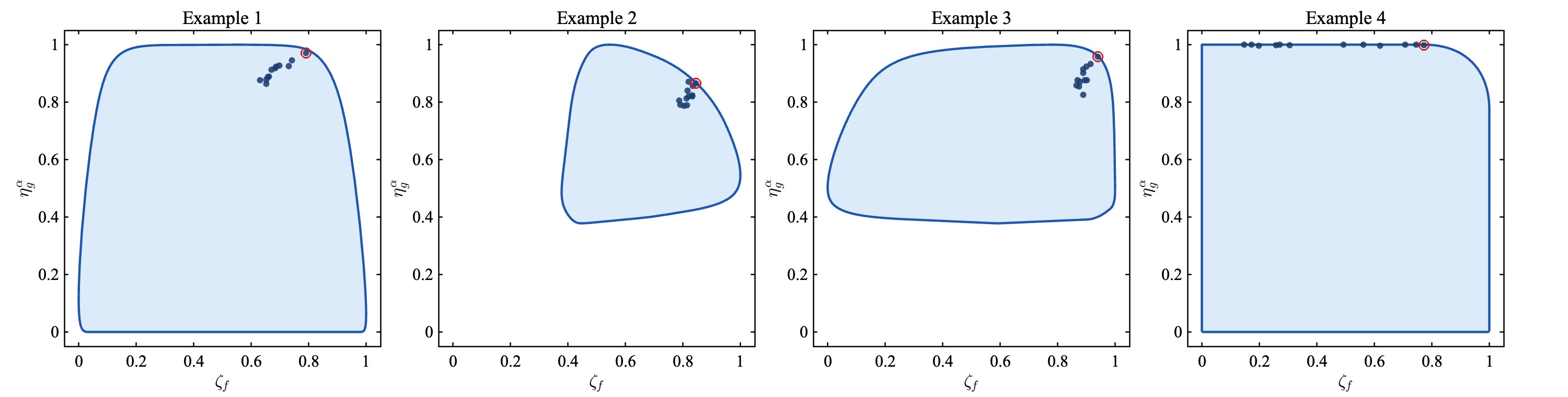}
	\vspace*{-15pt}
	\caption{Uncertainty regions for four localization examples on the bunny graph. The internal dots indicate leading vertex-fractional spectral atoms, and the ringed dot marks the dominant atom.}
	\label{filters}
\end{figure*}

\begin{figure}[h!]
	\centering
	\includegraphics[width=0.55\linewidth]{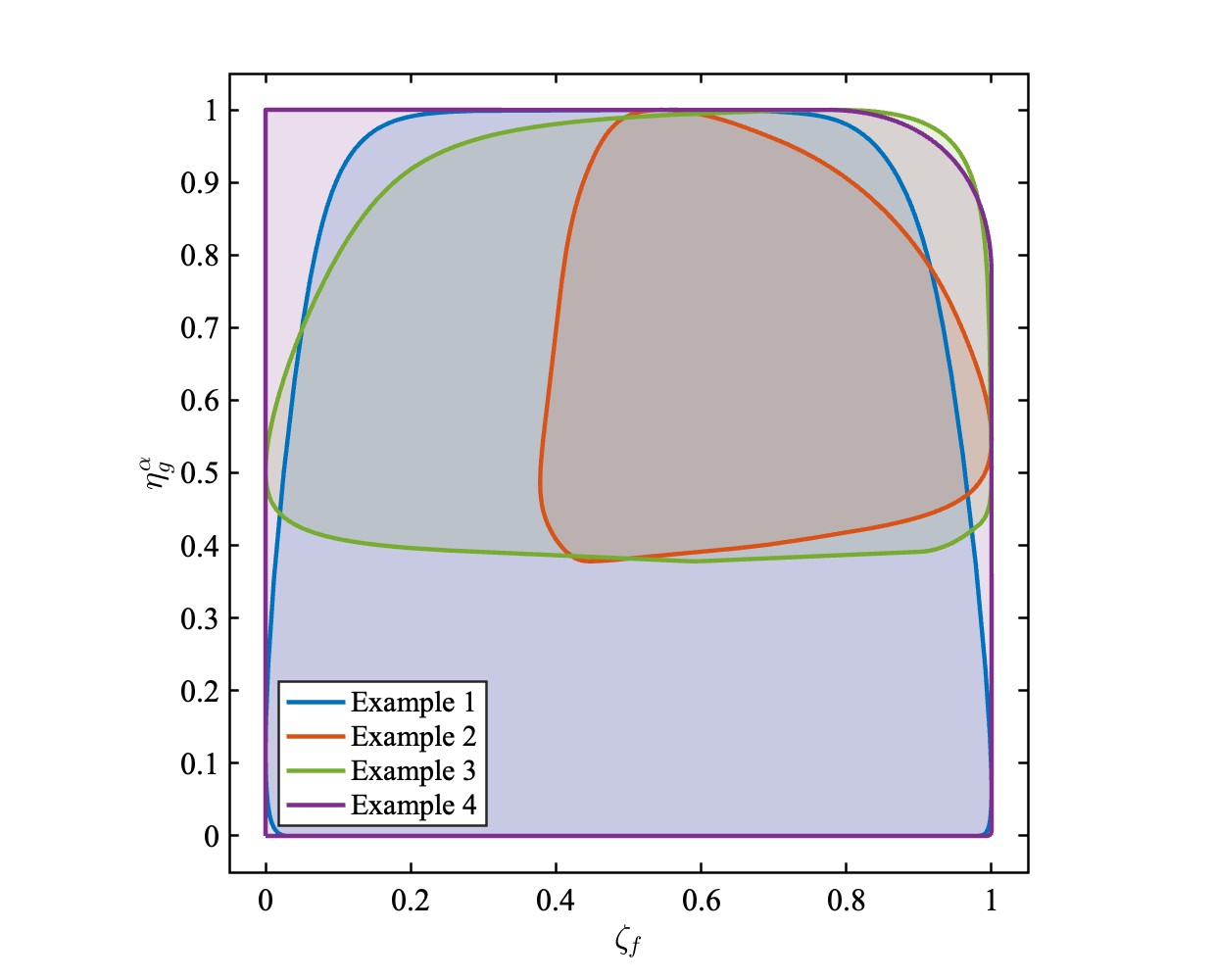}
	\vspace*{-8pt}
	\caption{Overlay of the four uncertainty regions on the bunny graph.}
	\label{filtersall}
\end{figure}

\subsection{Spectral Analysis of Joint Localization Operators}

We further analyze the spectra of the joint localization operators on the bunny graph. This experiment adopts \textit{Example~\ref{example3}} as the representative localization setting. The purpose of this experiment is to quantify how the fractional order and the rotation angle affect the operator spectra underlying the uncertainty regions. We consider the sandwiched joint localization operator $\mathbf{S}_{f,g}^{\alpha}$ and the rotated operator $\mathbf{R}_{f,g}^{\alpha}(\beta)$. In the graph fractional-order scan, the rotation angle is fixed at $\beta=\pi/4$. In the rotation-angle scan, the graph fractional order is fixed at $\alpha=1.0$.

Fig.~\ref{SRbunny} summarizes the eigenvalue decay behavior of the two operators. 
Panel (a) shows the eigenvalue decay of $\mathbf{S}_{f,g}^{\alpha}$ for different fractional orders, reflecting how the fractional representation affects the concentration of joint-localized modes. 
Panel (b) shows the eigenvalue decay of $\mathbf{R}_{f,g}^{\alpha}(\beta)$ for different fractional orders at $\beta=\pi/4$, revealing the influence of the fractional order on the supporting operator associated with the uncertainty boundary.
Panel (c) shows the eigenvalue decay of $\mathbf{R}_{f,g}^{\alpha}(\beta)$ for different rotation angles at fixed fractional order, illustrating how the rotation angle changes the balance between vertex-domain and fractional spectral-domain localization. 
\begin{figure*}[h!]
	\centering
	\includegraphics[width=\linewidth]{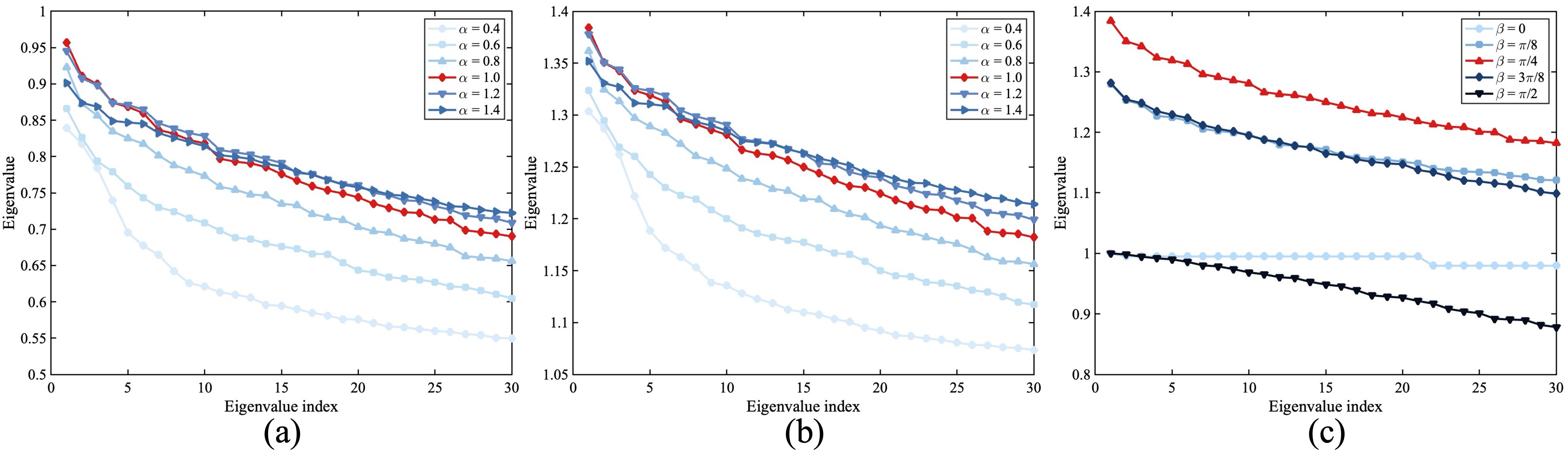}
	\vspace*{-20pt}
	\caption{Spectral analysis of the joint localization operators on the bunny graph. (a) Eigenvalue decay of the sandwiched operator $\mathbf{S}_{f,g}^{\alpha}$ for different fractional orders. (b) Eigenvalue decay of $\mathbf{R}_{f,g}^{\alpha}(\beta)$ for different fractional orders at fixed $\beta=\pi/4$. (c) Eigenvalue decay of the rotated operator $\mathbf{R}_{f,g}^{\alpha}(\beta)$ for different rotation angles at fixed $\alpha=1.0$.}
	\label{SRbunny}
\end{figure*}

To further quantify the spectra, Tables~\ref{tab01} and~\ref{tab02} report several theory-related metrics. For $\mathbf{S}_{f,g}^{\alpha}$, we list the largest eigenvalue $\sigma_1$, the variance $\mathrm{var}[\mathbf{S}_{f,g}^{\alpha}]$, and the corresponding upper bound $B_{\varkappa}$. For $\mathbf{R}_{f,g}^{\alpha}(\beta)$, we report $\rho_1(\beta)$, $\mathrm{var}[\mathbf{R}_{f,g}^{\alpha}]$, and the upper bound $B_{\kappa}$. The representative signal is chosen as the normalized vertex-domain localization filter, namely $\bm{x}=\bm{f}/|\bm{f}|_2$, and the truncation threshold is set to $0.9$ times the largest eigenvalue.
\begin{table}[h!]
	\centering
	\caption{Theory-related metrics for different fractional orders $\alpha$ on the bunny graph}
	\setlength{\tabcolsep}{4pt}
	\renewcommand{\arraystretch}{1.2}
	\begin{tabular}{c|ccc|ccc}
		\hline
		$\alpha$ 
		& $\sigma_1$ 
		& $\mathrm{var}[\mathbf{S}^{\alpha}_{f,g}]$ 
		& $B_{\varkappa}$
		& $\rho_1(\beta)$ 
		& $\mathrm{var}[\mathbf{R}^{\alpha}_{f,g}]$ 
		& $B_{\kappa}$ \\
		\hline
		0.4 & 0.84 & 0.03 & 4.08 & 1.30 & 0.03 & 2.26 \\
		0.6 & 0.87 & 0.05 & 2.80 & 1.32 & 0.04 & 1.56 \\
		0.8 & 0.92 & 0.04 & 1.75 & 1.36 & 0.03 & 0.92 \\
		1.0 & 0.96 & 0.02 & 1.39 & 1.38 & 0.01 & 0.68 \\
		1.2 & 0.95 & 0.04 & 1.93 & 1.38 & 0.03 & 1.03 \\
		1.4 & 0.90 & 0.06 & 3.16 & 1.35 & 0.05 & 1.77 \\
		\hline
	\end{tabular}
	\label{tab01}
\end{table}

\begin{table}[h!]
	\centering
	\caption{Theory-related metrics for different rotation angles $\beta$ on the bunny graph}
	\setlength{\tabcolsep}{5pt}
	\renewcommand{\arraystretch}{1.2}
	\begin{tabular}{c|ccc}
		\hline
		$\beta$ 
		& $\rho_1(\beta)$ 
		& $\mathrm{var}[\mathbf{R}^{\alpha}_{f,g}]$ 
		& $B_{\kappa}$ \\
		\hline
		$0$       & 1.00 & 0.02 & 1.74 \\
		$\pi/8$  & 1.28 & 0.02 & 1.06 \\
		$\pi/4$  & 1.38 & 0.01 & 0.68 \\
		$3\pi/8$  & 1.28 & 0.00 & 0.34 \\
		$\pi/2$  & 1.00 & 0.00 & 0.02 \\
		\hline
	\end{tabular}
	\label{tab02}
\end{table}

The results in Table~\ref{tab01} show that the fractional order affects both the dominant eigenvalues and the approximation-related quantities of the joint localization operators. Variations in $\sigma_1$ and $\rho_1(\beta)$ indicate that the leading joint-localized modes depend on the fractional representation, while the changes in the variances and bounds reflect different concentration behaviors with respect to the associated eigenspaces. Table~\ref{tab02} further shows that the rotation angle changes the spectral behavior of $\mathbf{R}_{f,g}^{\alpha}(\beta)$ by adjusting the relative weights of vertex-domain and fractional spectral-domain localization. These observations confirm that the fractional order and rotation angle provide complementary degrees of freedom for characterizing joint localization and the boundary geometry of the uncertainty region.

\section{Conclusion}
\label{sec6}
In this paper, we proposed a graph fractional uncertainty principle based on the GFRFT, which extends classical graph uncertainty relations from the graph Fourier domain to more general fractional spectral domains. The framework allows flexible characterization and visualization of how graph signals are localized across both vertex and spectral domains. Our results show that classical uncertainty relations appear as a special case, while other fractional settings reveal richer localization patterns. This approach provides a practical and theoretical tool for analyzing graph signals and designing graph-adapted filters.

\appendices
\section{Proof of Theorem 1}
\label{appendixA}
We prove the result for each operator separately.

By definition, $\mathbf{D}_{f}=\mathrm{diag}(\bm{f})$, where $0 \le f_i \le 1$ and $\|\bm{f}\|_{\infty}=1$. Hence, $\mathbf{D}_{f}$ is a real diagonal matrix, which is clearly Hermitian. Moreover, for any $\bm{x}\in\mathbb{C}^N$,
\[
\bm{x}^{\mathrm{H}}\mathbf{D}_{f}\bm{x}
=
\sum_{i=1}^{N} f_i |x_i|^2 \ge 0,
\]
which shows that $\mathbf{D}_{f}$ is positive semidefinite. Since its eigenvalues are $\{f_i\}$, its spectral norm satisfies
\[
\|\mathbf{D}_{f}\|_2 = \max_i f_i = \|\bm{f}\|_{\infty} = 1.
\]

The operator $\mathbf{B}^{\alpha}_{g}$ is defined as
\[
\mathbf{B}^{\alpha}_{g}
=\mathbf{F}^{-\alpha} \mathbf{D}_{\hat{g}} \mathbf{F}^{\alpha}=
(\mathbf{F}^{\alpha})^{\mathrm{H}} \mathbf{D}_{\hat{g}} \mathbf{F}^{\alpha},
\]
where $\mathbf{F}^{\alpha}$ is unitary and $\mathbf{D}_{\hat{g}}=\mathrm{diag}(\hat{g})$ with $0 \le \hat{g}_i \le 1$ and $\|\hat{g}\|_{\infty}=1$. Hence,
\[
(\mathbf{B}^{\alpha}_{g})^{\mathrm{H}}
=
(\mathbf{F}^{\alpha})^{\mathrm{H}} \mathbf{D}_{\hat{g}} \mathbf{F}^{\alpha}
=
\mathbf{B}^{\alpha}_{g},
\]
which shows that $\mathbf{B}^{\alpha}_{g}$ is Hermitian. For any $\bm{x}\in\mathbb{C}^N$,
\[
\bm{x}^{\mathrm{H}}\mathbf{B}^{\alpha}_{g}\bm{x}
=
(\mathbf{F}^{\alpha}\bm{x})^{\mathrm{H}} \mathbf{D}_{\hat{g}} (\mathbf{F}^{\alpha}\bm{x})
=
\sum_{i=1}^{N} \hat{g}_i |\hat{x}_i|^2 \ge 0,
\]
where $\hat{\bm{x}}=\mathbf{F}^{\alpha}\bm{x}$ is defined in Eq. \eqref{GFRFT}. Thus, $\mathbf{B}^{\alpha}_{g}$ is positive semidefinite. 

Since $\mathbf{B}^{\alpha}_{g}$ is unitarily similar to $\mathbf{D}_{\hat{g}}$, they share the same eigenvalues. Therefore,
\[
\|\mathbf{B}^{\alpha}_{g}\|_2
=
\|\mathbf{D}_{\hat{g}}\|_2
=
\max_i \hat{g}_i
=
\|\hat{g}\|_{\infty}
=
1.
\]

This completes the proof.

\section{Proof of Theorem 2}
\label{appendixB}
We first show that $\mathbf{S}^{\alpha}_{f,g}$ is Hermitian. Since both 
$\mathbf{D}_f$ and $(\mathbf{B}^{\alpha}_{g})^{\frac{1}{2}}$ are Hermitian, we have
\[
(\mathbf{S}^{\alpha}_{f,g})^{\mathrm{H}}
=
(\mathbf{B}^{\alpha}_{g})^{\frac{1}{2}}
\mathbf{D}_{f}
(\mathbf{B}^{\alpha}_{g})^{\frac{1}{2}}
=
\mathbf{S}^{\alpha}_{f,g},
\]
which proves that $\mathbf{S}^{\alpha}_{f,g}$ is Hermitian.

Next, for any $\bm{x}\in\mathbb{C}^N$, define 
$\bm{y} = (\mathbf{B}^{\alpha}_{g})^{\frac{1}{2}}\bm{x}$. Then
\[
\bm{x}^{\mathrm{H}}\mathbf{S}^{\alpha}_{f,g}\bm{x}
=
\bm{y}^{\mathrm{H}}\mathbf{D}_f \bm{y}
=
\sum_{i=1}^{N} f_i |y_i|^2 \ge 0,
\]
since $f_i \ge 0$. Hence, $\mathbf{S}^{\alpha}_{f,g}$ is positive semidefinite.

Finally, we bound its spectral norm. Using the submultiplicativity of the operator norm,
\[
\|\mathbf{S}^{\alpha}_{f,g}\|_2
\le
\|(\mathbf{B}^{\alpha}_{g})^{\frac{1}{2}}\|_2^2 \|\mathbf{D}_f\|_2.
\]
Since $(\mathbf{B}^{\alpha}_{g})^{\frac{1}{2}}$ is positive semidefinite,
\[
\|(\mathbf{B}^{\alpha}_{g})^{\frac{1}{2}}\|_2^2
=
\|\mathbf{B}^{\alpha}_{g}\|_2,
\]
and from \textit{Theorem 1} we have $\|\mathbf{B}^{\alpha}_{g}\|_2 = 1$, and $\|\mathbf{D}_f\|_2 = 1$. Therefore,
\[
\|\mathbf{S}^{\alpha}_{f,g}\|_2 \le 1.
\]

This completes the proof.

\section{Proof of Lemma 1}
\label{appendixC}

Let $\bm{x}\in\mathbb{C}^N$ with $\|\bm{x}\|_2=1$, and define the normalized vectors
\[
\bm{y}=\frac{\mathbf{D}_f \bm{x}}{\|\mathbf{D}_f \bm{x}\|_2}, 
\quad
\bm{z}=\frac{\mathbf{B}^{\alpha}_g \bm{x}}{\|\mathbf{B}^{\alpha}_g \bm{x}\|_2}.
\]
By construction, $\|\bm{y}\|_2=\|\bm{z}\|_2=1$. Denote the angular distance between two unit vectors by
\[
\angle(\bm{y},\bm{z})=\arccos\big(\Re\langle \bm{y}, \bm{z} \rangle\big).
\]
Using the triangle inequality on the unit sphere, we have
\begin{equation}
	\angle(\bm{y},\bm{z}) \le \angle(\bm{y},\bm{x}) + \angle(\bm{z},\bm{x}).
	\label{eq:angle_triangle}
\end{equation}

We next bound the correlation between $\bm{y}$ and $\bm{z}$. By Cauchy–Schwarz inequality,
\[
\Re\langle \bm{y},\bm{z}\rangle 
\le |\langle \bm{y},\bm{z}\rangle|
= \frac{|\langle \mathbf{D}_f \bm{x}, \mathbf{B}^{\alpha}_g \bm{x}\rangle|}
{\|\mathbf{D}_f \bm{x}\|_2 \, \|\mathbf{B}^{\alpha}_g \bm{x}\|_2}.
\]
Using the definition of the operator $\mathbf{S}^{\alpha}_{f,g}
= (\mathbf{B}^{\alpha}_g)^{\frac{1}{2}} \mathbf{D}_f (\mathbf{B}^{\alpha}_g)^{\frac{1}{2}}$, we obtain
\[
\begin{aligned}
	|\langle \mathbf{D}_f \bm{x}, \mathbf{B}^{\alpha}_g \bm{x}\rangle|
	= &
	|\langle \mathbf{D}^{\frac{1}{2}}_f \bm{x}, \mathbf{D}^{\frac{1}{2}}_f\mathbf{B}^{\alpha}_g \bm{x}\rangle|
	\leq  \| \mathbf{D}^{\frac{1}{2}}_f \bm{x} \|_2 \| \mathbf{D}^{\frac{1}{2}}_f \mathbf{B}^{\alpha}_g \bm{x}\|_2 \\
	= & \| \mathbf{D}^{\frac{1}{2}}_f \bm{x} \|_2 \sqrt{\left< (\mathbf{B}_g^{\alpha})^{\frac{1}{2}}\mathbf{D}_f\mathbf{B}_g^{\alpha}\bm{x},(\mathbf{B}_g^{\alpha})^{\frac{1}{2}}\bm{x}\right> } \\
	\leq & \| \mathbf{D}^{\frac{1}{2}}_f \bm{x} \|_2  \| (\mathbf{B}_g^{\alpha})^{\frac{1}{2}}\bm{x}\|_2 \sqrt{\left< \mathbf{S}^{\alpha}_{f,g}\bm{x}, \bm{x}\right> }\\
	\leq & \| \mathbf{D}_f \bm{x} \|_2  \| \mathbf{B}_g^{\alpha}\bm{x}\|_2 \sqrt{\sigma_1},
\end{aligned}
\]
where $\sigma_1=\|\mathbf{S}^{\alpha}_{f,g}\|_2$ is the largest eigenvalue. Hence,
\[
\Re\langle \bm{y},\bm{z}\rangle \le \sqrt{\sigma_1}.
\]

On the other hand, by definition,
\[
\Re\langle \bm{y},\bm{x}\rangle = \sqrt{\zeta_f(\bm{x})},
\quad
\Re\langle \bm{z},\bm{x}\rangle = \sqrt{\eta^{\alpha}_g(\bm{x})}.
\]
Substituting these relations into \eqref{eq:angle_triangle} yields
\[
\arccos \sqrt{\zeta_f(\bm{x})}
+
\arccos \sqrt{\eta^{\alpha}_g(\bm{x})}
\ge
\arccos \sqrt{\sigma_1},
\]
which proves \eqref{arccoszeta_eta}. To derive \eqref{etaalpha}, note that $\arccos(\cdot)$ is monotonically decreasing on $(0,1]$, hence
\[
\arccos \sqrt{\eta^{\alpha}_g(\bm{x})}
\ge
\arccos \sqrt{\sigma_1}
-
\arccos \sqrt{\zeta_f(\bm{x})}.
\]
Applying the cosine function and using $\cos(a-b)=\cos a \cos b + \sin a \sin b$, we obtain
\[
\sqrt{\eta^{\alpha}_g(\bm{x})}
\le
\sqrt{\sigma_1 \zeta_f(\bm{x})}
+
\sqrt{(1-\sigma_1)(1-\zeta_f(\bm{x}))},
\]
which leads to
\[
\eta^{\alpha}_g(\bm{x})
\le
\left(
\sqrt{\sigma_1 \zeta_f(\bm{x})}
+
\sqrt{(1-\sigma_1)(1-\zeta_f(\bm{x}))}
\right)^2.
\]
This completes the proof.

\section{Proof of Theorem 4}
\label{appendixD}
For any $\beta \in [0,2\pi)$, consider the rotated joint operator
\[
\mathbf{R}^{\alpha}_{f,g}(\beta)
=
\cos\beta\, \mathbf{D}_f + \sin\beta\, \mathbf{B}_g^{\alpha}.
\]
Then, the linear combination of the localization measures can be written as
\[
\cos\beta\, \zeta_f(\bm{x}) + \sin\beta\, \eta_g^{\alpha}(\bm{x})
=
\bm{x}^{\mathrm{H}} \mathbf{R}^{\alpha}_{f,g}(\beta)\bm{x}.
\]
Since $\mathbf{R}^{\alpha}_{f,g}(\beta)$ is Hermitian, its Rayleigh quotient is bounded above by its largest eigenvalue $\rho_1(\beta)$, namely,
\[
\bm{x}^{\mathrm{H}} \mathbf{R}^{\alpha}_{f,g}(\beta)\bm{x}
\le \rho_1(\beta).
\]
Moreover, equality is attained when $\bm{x}$ is chosen as an eigenvector corresponding to $\rho_1(\beta)$, implying that $\ell(\beta)$ is a supporting line of the admissible region.

Therefore, for any admissible pair $(\zeta_f(\bm{x}), \eta_g^{\alpha}(\bm{x}))$, we obtain
\[
\cos\beta\, \zeta_f(\bm{x}) + \sin\beta\, \eta_g^{\alpha}(\bm{x})
\le \rho_1(\beta).
\]

Since $\bm{x}$ is arbitrary with $\|\bm{x}\|_2=1$, the above inequality holds for all points in $\Gamma(\mathbf{D}_f,\mathbf{B}_g^{\alpha})$, which completes the proof.

\section*{Acknowledgments}
The authors would like to sincerely thank Prof. Wolfgang Erb for generously sharing his code.


\begin{thebibliography}{99}
\bibliographystyle{IEEEtran}

\bibitem{GFTlaplace} 
D.~I.~Shuman, S.~K.~Narang, P.~Frossard, A.~Ortega, and P.~Vandergheynst, ``The emerging field of signal processing on graphs: Extending high-dimensional data analysis to networks and other irregular domains,'' \textit{IEEE Signal Process. Mag.}, vol. 30, no. 3, pp. 83--98, 2013.

\bibitem{GFTadjacency} 
A.~Sandryhaila and J.~M.~F.~Moura, ``Big data processing with signal processing on graphs: Representation and processing of massive data sets with irregular structure,'' \textit{IEEE Signal Process. Mag.}, vol. 31, no. 5, pp. 80--90, 2014.

\bibitem{Gintroduction}
A.~Ortega, \textit{Introduction to Graph Signal Processing}, Cambridge University Press, Cambridge, U.K., 2022.

\bibitem{Goverview} 
A.~Ortega, P.~Frossard, J.~Kova\v{c}evi\'{c}, J.~M.~F.~Moura, and P.~Vandergheynst, ``Graph signal processing: Overview, challenges, and applications,'' \textit{Proc. IEEE}, vol. 106, no. 5, pp. 808--828, 2018.			

\bibitem{Ghistory} 
G.~Leus, A.~G.~Marques, J.~M.~F.~Moura, A.~Ortega, and D.~I.~Shuman, ``Graph signal processing: History, development, impact, and outlook,'' \textit{IEEE Signal Process. Mag.}, vol. 40, no. 4, pp. 49--60, 2023.

\bibitem{UncertaintyC}
G.~B.~Folland, A.~Sitaram, ``The uncertainty principle: A mathematical survey,'' \textit{J. Fourier Anal. Appl.}, vol. 3, no. 3, pp. 207--238, 1997.

\bibitem{UncertaintyM}
D.~Slepian, ``Some comments on Fourier analysis, uncertainty and modeling,'' \textit{SIAM Rev.}, vol. 25, no. 3, pp. 379--393, 1983.

\bibitem{GspectralUC} %graph uncertainty principle
A.~Agaskar, Y.~M.~Lu, ``A spectral graph uncertainty principle,'' \textit{IEEE Trans. Inf. Theory}, vol. 59, no. 7, pp. 4338--4356, 2013.

\bibitem{Resistance}
D.~Klein and M.~Randi\'{c}, ``Resistance distance,'' \textit{J. Math. Chem.}, vol. 12, no. 1, pp. 81--95, 1993.

\bibitem{Geodesic}
C.~Godsil and G.~Royle, \textit{Algebraic Graph Theory}. NewYork, NY, USA: Springer-Verlag, 2001.

\bibitem{Diffusion}
R.~R.~Coifman and M.~Maggioni, ``Diffusion wavelets,'' \textit{Appl. Comput. Harmon. Anal.}, vol. 21, no. 1, pp. 53--94, 2006.

\bibitem{GUncertainty} %
M.~Tsitsvero, S.~Barbarossa, P.~D.~Lorenzo, ``Signals on graphs: Uncertainty principle and sampling,'' \textit{IEEE Trans. Signal Process.}, vol. 64, no. 18. pp. 4845--4860, 2016.

\bibitem{PSWfunctions1}
D.~Slepian, H.~O.~Pollak, ``Prolate spheroidal wave functions, Fourier analysis and uncertainty. I,'' \textit{Bell Syst. Tech. J.}, vol. 40, no. 1, pp. 43--63, 1961.

\bibitem{PSWfunctions2}
H.~J.~Landau, H.~O.~Pollak, ``Prolate spheroidal wave functions, Fourier analysis and uncertainty. II,'' \textit{Bell Syst. Tech. J.}, vol. 40, no. 1, pp. 65--84, 1961.

\bibitem{GshapesUC} 
W.~Erb, ``Shapes of uncertainty in spectral graph theory,'' \textit{IEEE Trans. Inf. Theory}, vol. 67, no. 2, pp. 1291--1307, 2021.

\bibitem{GLCTsampling} 
Y.~Zhang and B.~Z.~Li, ``Discrete linear canonical transform on graphs: Uncertainty principle and sampling,'' \textit{Signal Process.}, vol. 226, 2025, Art. no. 109668.

\bibitem{JFTUC}
Y.~Zhao, X.~Jian, F.~Ji, W.~P.~Tay, and A.~Ortega, ``Uncertainty principle for vertex-time graph signal processing,'' 2026, arXiv preprint, arXiv:2602.04084.

\bibitem{GGSPUC} 
Y.~Zhao, X.~Jian, F.~Ji, W.~P.~Tay, and A.~Ortega, ``Generalized graph signal reconstruction via the uncertainty principle," in \textit{Proc. IEEE Int. Conf. Acoust., Speech, Signal Process. (ICASSP)}, 2025, pp. 1--5.

\bibitem{GFRFT} 
Y.~Q.~Wang, B.~Z.~Li, and Q.~Y.~Cheng, ``The fractional Fourier transform on graphs,'' in
\textit{Proc. Asia-Pacific Signal Inf. Process. Assoc. Annu. Summit Conf. (APSIPA ASC)}, 2017, pp. 105--110.

\bibitem{GFRFTconvolution} 
Y.~Zhang and B.~Z.~Li, ``The fractional Fourier transform on graphs: Modulation and convolution,'' in \textit{Proc. Int. Conf. Signal Image Process. (ICSIP)}, 2023, pp. 737-741.

\bibitem{GFRFTspectral} 
J.~Wu, F.~Wu, Q.~Yang, Y.~Zhang, X.~Liu, Y.~Kong, L.~Senhadji, and H.~Shu, ``Fractional spectral graph wavelets and their applications,'' \textit{Math. Probl. Eng.}, 2020.

\bibitem{FRFT1}
L.~B.~Almeida, ``The fractional Fourier transform and time-frequency representations,'' \textit{IEEE Trans. Signal Process.}, vol. 42, no. 11, pp. 3084--3091, 1994.

\bibitem{FRFT2}
H.~M.~Ozaktas, Z.~Zalevsky, and M.~A.~Kutay, \textit{The Fractional Fourier Transform with Applications in Optics and Signal Processing}. New York, NY, USA: Wiley, 2001.

\bibitem{DFRFT}
C.~Candan, M.~A.~Kutay, and H.~M.~Ozaktas, ``The discrete fractional Fourier transform,'' \textit{IEEE Trans. Signal Process.}, vol. 48, no. 5, pp. 1329--1337, 2000.

\bibitem{GFRFT_unified}
T.~Alika{\c{s}}ifo{\u{g}}lu, B.~Kartal, and A.~Ko{\c{c}}, ``Graph fractional Fourier transform: A unified theory,'' \textit{IEEE Trans. Signal Process.}, vol. 72, pp. 3834--3850, 2024.

\bibitem{GFRFTdirected} 
F.~J.~Yan and B.~Z.~Li, ``Spectral graph fractional Fourier transform for directed graphs and its application,'' \textit{Signal Process.}, vol. 210, 2023, Art. no. 109099. 

\bibitem{WGFRFT} 
F.~J.~Yan and B.~Z.~Li, ``Windowed fractional Fourier transform on graphs: Properties and fast algorithm,'' \textit{Digit. Signal Process.}, vol. 118, 2021, Art. no. 103210.

\bibitem{JFRFT}
T.~Alika{\c{s}}ifo{\u{g}}lu, B.~Kartal, E.~\"{O}zg\"{u}nay, and A.~Ko{\c{c}}, ``Joint time-vertex fractional Fourier transform,'' \textit{Signal Process.}, vol. 233, 2025, Art. no. 109944.

\bibitem{GFRFTfiltering} 
C.~Ozturk, H.~M.~Ozaktas, S.~Gezici, and A.~Koç, ``Optimal fractional Fourier filtering for graph signals,'' \textit{IEEE Trans. Signal Process.}, vol. 69, pp. 2902--2912, 2021.

\bibitem{JFRFTfilter} 
Z.~Ge, H.~Guo, T.~Wang, and Z.~Yang, ``The optimal joint time-vertex graph filter design: From ordinary graph Fourier domains to fractional graph Fourier domains,'' \textit{Circuits, Syst. Signal Process.}, vol. 42, pp. 4002--4018, 2023.

\bibitem{JFRFTwiener}
T.~Alika\c{s}\i fo\u{g}lu, B.~Kartal, and A.~Ko\c{c}, ``Wiener filtering in joint time-vertex fractional Fourier domains,'' \textit{IEEE Signal Process. Lett.}, vol. 31, pp. 1319--1323, 2024.

\bibitem{HGFRFT} 
Y.~Zhang and B.~Z.~Li, ``The graph fractional Fourier transform in Hilbert space,'' \textit{IEEE Trans. Signal Inf. Process. Netw.}, vol. 11, pp. 242--257, 2025.

\bibitem{GFRFTsampling} 
Y.~Q.~Wang and B.~Z.~Li, ``The fractional Fourier transform on graphs: Sampling and recovery,'' in \textit{Proc. 14th IEEE Int. Conf. Signal Process. (ICSP)}, 2018, pp. 1103--1108.

\bibitem{JFRFTsampling}
Y.~Zhang and B.~Z.~Li, ``Sampling of graph signals based on joint time-vertex fractional Fourier transform,'' \textit{Signal Process.}, vol. 239, 2026, Art. no. 110309.

\bibitem{LSBreconstruction}
X.~Wang, P.~Liu, and Y.~Gu, ``Local-set-based graph signal reconstruction,'' \textit{IEEE Trans. Signal Process.}, vol. 63, no. 9, pp. 2432--2444, 2015.

\bibitem{GGSPreconstruction}
X.~Jian, W.~P.~Tay, and Y.~C.~Eldar, ``Kernel based reconstruction for generalized graph signal processing,'' \textit{IEEE Trans. Signal Process.}, vol. 72, pp. 2308--2322, 2024.

\bibitem{ComplexKernel}
Y.~Zhang, L.~Peng, and B.~Z.~Li, ``Reconstruction of graph signals on complex manifolds with kernel methods,'' \textit{IEEE Trans. Signal Process.}, 2026.

\bibitem{Optlocalized}
W.~Erb, ``Optimally space localized polynomials with applications in signal processing,'' \textit{J. Fourier Anal. Appl.}, vol. 18, no. 1, pp. 45--66, 2012.

\bibitem{Slepian}
W.~Erb and S.~Mathias, ``An alternative to Slepian functions on the unit sphere – A space–frequency analysis based on localized spherical polynomials,'' \textit{Appl. Comput. Harmon. Anal.}, vol. 38, no. 2, pp. 222--241, 2015.

\bibitem{Erb}
H.~Klaja, ``On Erb's uncertainty principle,'' \textit{Studia Math.}, vol. 232, no. 1, pp. 7--17, 2016.

\bibitem{Fiedler}
D.~Van De Ville, R.~Demesmaeker, and M.~G.~Preti, ``
When Slepian meets Fiedler: Putting a focus on the graph spectrum,'' \textit{IEEE Signal Process. Lett.}, vol. 24, no. 7, pp. 1001--1004, 2017.

\bibitem{Spatio1}
F.~J.~Simons, F.~A.~Dahlen, and M.~A.~Wieczorek, ``Spatiospectral concentration on a sphere,'' \textit{SIAM Rev.}, vol. 48, no. 3, pp. 504--536, 2006.

\bibitem{Spatio2}
A. Plattner and F. J. Simons, ``Spatiospectral concentration of vector fields on a sphere,'' \textit{Appl. Comput. Harmon. Anal.}, vol. 36, no. 1, pp. 1--22, 2014.

\bibitem{GraphTheory}
F.~R.~K.~Chung, \textit{Spectral Graph Theory}. Providence, RI, USA: American Mathematical Society, 1997.

\bibitem{Hausdorff}
R.~A.~Horn and C.~R.~Johnson, \textit{Topics in Matrix Analysis}. Cambridge, U.K.: Cambridge University Press, 1991.

\bibitem{GSPBox}
N.~Perraudin, J.~Paratte, D.~Shuman, V.~Kalofolias, P.~Vandergheynst, D.~K.~Hammond, GSPBOX: a toolbox for signal processing on graphs, 2016, arXiv preprint, arXiv: 1408.5781.





\end{thebibliography}
\end{document}